\newtheorem{theorem}{Theorem}[section]
\newtheorem{corollary}[theorem]{Corollary}
\newtheorem{lemma}[theorem]{Lemma}
\theoremstyle{definition}
\newtheorem{definition}{Definition}[section]
\newtheorem{assumption}{Assumption}[section]
\newtheorem{example}{Example}[section]
\newtheorem{remark}{Remark}[section]
\definecolor{codegreen}{rgb}{0,0.6,0}
\definecolor{codegray}{rgb}{0.5,0.5,0.5}
\definecolor{codepurple}{rgb}{0.58,0,0.82}
\definecolor{backcolour}{rgb}{0.95,0.95,0.92}
\definecolor{deepblue}{rgb}{0,0,0.5}
\definecolor{deepred}{rgb}{0.6,0,0}
\definecolor{deepgreen}{rgb}{0,0.5,0}
\lstdefinestyle{mystyle}{
    backgroundcolor=\color{backcolour},   
    commentstyle=\color{codegreen},
    keywordstyle=\color{magenta},
    numberstyle=\tiny\color{codegray},
    stringstyle=\color{codepurple},
    basicstyle=\ttfamily\footnotesize,
    language=python,
    breakatwhitespace=false,         
    breaklines=true,                 
    captionpos=b,                    
    keepspaces=true,                 
    numbers=left,                    
    numbersep=5pt,                  
    showspaces=false,                
    showstringspaces=false,
    showtabs=false,                  
    tabsize=2
}
\newcommand{\deq}{\overset{d}{=}}
\DeclareFixedFont{\ttm}{T1}{txtt}{m}{n}{10}  
\def\eqref#1{equation~\ref{#1}}
\def\1{\bm{1}}
\DeclareMathAlphabet{\mathsfit}{\encodingdefault}{\sfdefault}{m}{sl}
\SetMathAlphabet{\mathsfit}{bold}{\encodingdefault}{\sfdefault}{bx}{n}
\def\gS{{\mathcal{S}}}
\newcommand{\E}{\mathbb{E}}
\newcommand{\R}{\mathbb{R}}
\newcommand{\C}{\mathbb{C}}
\newcommand{\Var}{\mathrm{Var}}
\newcommand{\Cov}{\mathrm{Cov}}
\DeclareMathOperator*{\argmin}{arg\,min}
\DeclareMathOperator{\Tr}{Tr}
\newcommand{\RNum}[1]{\uppercase\expandafter{\romannumeral #1\relax}}
\newcommand{\appropto}{\mathrel{\vcenter{
  \offinterlineskip\halign{\hfil$##$\cr
    \propto\cr\noalign{\kern2pt}\sim\cr\noalign{\kern-2pt}}}}}
\DeclareMathOperator{\diag}{diag}
\newcommand{\normal}{\mathcal{N}}
\renewcommand{\eqref}[1]{(\ref{#1})}
\def\0{{\bm{0}}}
\def\1{{\bm{1}}}
\title[Ridgeless Least Squares Estimator]{
Prediction Risk and Estimation Risk of the Ridgeless Least Squares Estimator under General Assumptions on Regression Errors}
\author{Sungyoon Lee}
\address{Department of Computer Science,
   Hanyang University,
   Seoul, Korea}
   \email{sungyoonlee@hanyang.ac.kr} 
\author{Sokbae Lee}
\address{Department of Economics,
   Columbia University,
   NY, United States}
   \email{sl3841@columbia.edu}
 \date{May 2024}
\begin{document}

\begin{abstract}
In recent years, there has been a significant growth in research focusing on minimum $\ell_2$ norm (ridgeless) interpolation least squares estimators. However, the majority of these analyses have been limited to an unrealistic regression error structure, assuming independent and identically distributed errors with zero mean and common variance. In this paper, we explore prediction risk as well as estimation risk under more general regression error assumptions, highlighting the benefits of overparameterization in a more realistic setting that allows for clustered or serial dependence. Notably, we establish that the estimation difficulties associated with the variance components of both risks can be summarized through the trace of the variance-covariance matrix of the regression errors. Our findings suggest that the benefits of overparameterization can extend to time series, panel and grouped data.
\end{abstract}

\maketitle

\onehalfspacing

\section{Introduction}

Recent years have witnessed a fast growing body of work that analyzes minimum $\ell_2$ norm (ridgeless) interpolation least squares estimators \citep[see, e.g.,][and references therein]{bartlett2020benign,hastie2022surprises,tsigler2023benign}. 
Researchers in this field were inspired by the ability of deep neural networks to accurately predict noisy training data with perfect fits, a phenomenon known as ``double descent'' or ``benign overfitting''
\citep[e.g.,][among many others]{belkin2018understand,belkin2019reconciling,belkin2020two,zou2021benign,mei2022generalization}.
They discovered that to achieve this phenomenon, overparameterization is critical.

In the setting of linear regression, we have the training data  $\{(x_i,y_i) \in\R^p\times \R:  i=1,\cdots,n \}$,
where the outcome variable $y_i$ is generated from 
\begin{align*}
y_i =x_i^\top\beta+\varepsilon_i, \;i=1,\ldots,n,    
\end{align*} 
$x_i$ is a vector of features (or regressors), $\beta$ is a vector of unknown parameters, and $\varepsilon_i$ is a regression error. 
Here, $n$ is the sample size of the training data and $p$ is the dimension of the parameter vector $\beta$.

In the literature, the main object for the theoretical analyses has been mainly on the out-of-sample prediction risk. That is, for the ridge or interpolation estimator $\hat{\beta}$, the literature has focused on 
\begin{align*}
 \mathbb{E} \Big[ (x_0^\top \hat{\beta}-x_0^\top \beta )^2 \mid x_1,\ldots,x_n \Big],
\end{align*}
where $x_0$ is a test observation that is identically distributed as $x_i$ but independent of the training data. 
For example, \citet{Dobriban:Wager,wu2020optimal,richards2021asymptotics,hastie2022surprises} analyzed the predictive risk of ridge(less) regression and obtained exact asymptotic expressions under the assumption that $p/n$ converges to some constant as both $p$ and $n$ go to infinity. Overall, they found the double descent behavior of the ridgeless least squares estimator in terms of the prediction risk.
\citet{bartlett2020benign,kobak2020optimal,tsigler2023benign} characterized the phenomenon of benign overfitting in a different setting.

To the best of our knowledge, a vast majority of the theoretical analyses have been {confined to a simple data generating process, 
namely, the observations are independent and identically distributed (i.i.d.), and the regression errors have mean zero, have the common variance, and are independent of the feature vectors.} That is,
\begin{align}\label{simple:dgp}
 \text{$(y_i,x_i^\top)^\top \sim$ i.i.d. with $\mathbb{E}[\varepsilon_i] = 0$, $\mathbb{E}[\varepsilon_i^2] = \sigma^2 < \infty$}\
 \text{and $\varepsilon_i$ is independent of $x_i$.}
\end{align}
{This assumption, although convenient, is likely to be unrealistic in various real-world examples.
For instance, \citet{liao2023economic} adopted high-dimensional linear models to examine the double descent phenomenon in economic forecasts. In their applications, the outcome variables include  S\&P firms' earnings, U.S. equity premium, U.S. unemployment rate, and countries’ GDP growth rate. As in their applications, economic forecasts are associated with time series or panel data. As a result, it is improbable that (\ref{simple:dgp}) holds in these applications. As another example, \citet{spiess2023double} examined the performance of high-dimensional synthetic control estimators with many control units. The outcome variable in their application is the state-level smoking rates in the \citet{ADH:2010} dataset. Considering the geographical aspects of the U.S. states, it is unlikely that the regression errors underlying the synthetic control estimators adhere to (\ref{simple:dgp}). In short, it is desirable to go beyond the simple but unrealistic regression error assumption given in (\ref{simple:dgp}).}


\begin{figure}[htp]
  \centering
  \includegraphics[width=\textwidth]{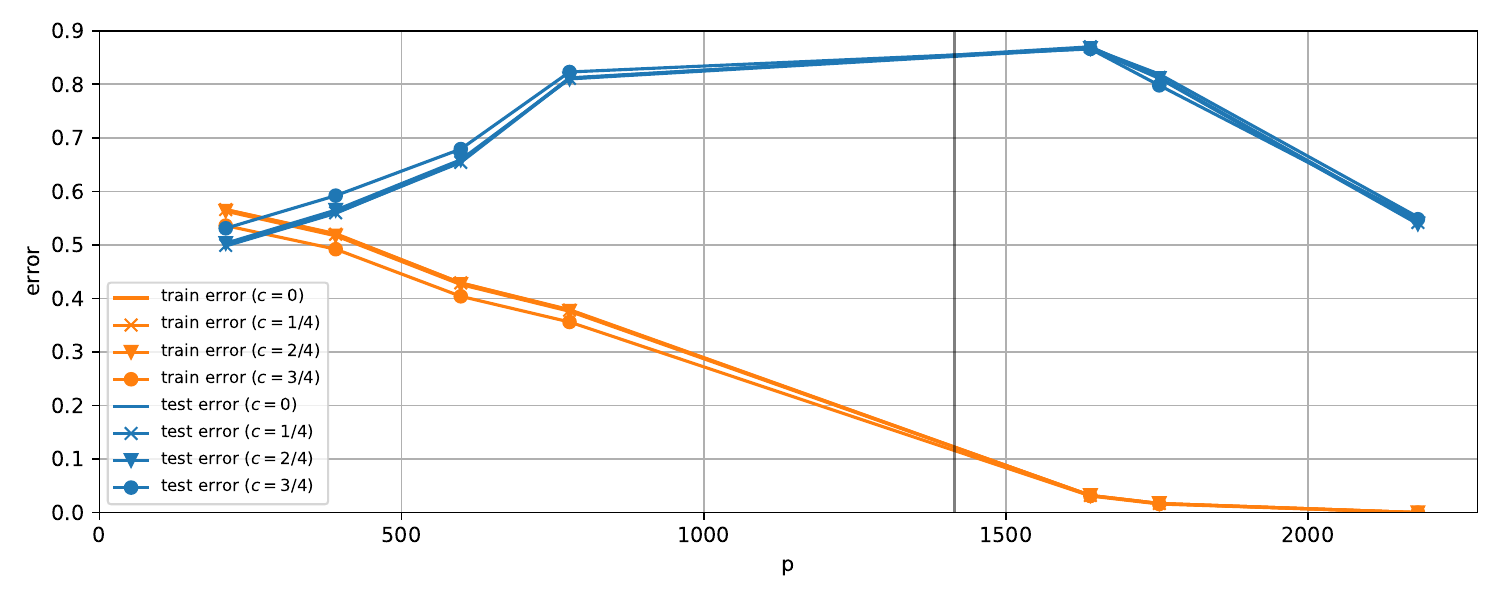}
  \caption{Comparison of in-sample and out-of-sample mean squared error (MSE) across various degrees of clustered noise.
  The vertical line indicates $p=n\;(=1,415)$.
  }\label{fig:ACS}
\end{figure}

To further motivate, we start with our own real-data example from American Community Survey (ACS) 2018, extracted from IPUMS USA \citep{IPUMS}. The ACS is an ongoing annual survey by the US Census Bureau that provides key information about the US population. To have a relatively homogeneous population, the sample extract is restricted to white males residing in California with at least a bachelor’s degree. We consider a demographic group defined 
by their age, the type of degree, and the field of degree. 
Then, we compute the average of log hourly wages for each age-degree-field group, treat each group average as the outcome variable, and predict group wages by various group-level regression models where the regressors are constructed using the indicator variables of age, degree, and field as well as their interactions. We consider 7 specifications ranging from 209 to 2,182 regressors. 
To understand the role of non-i.i.d. regressor errors, we add the artificial noise to the training sample.
See Appendix~\ref{app:ACS} for details regarding how to generate the artificial noise. In the experiment, the constant $c$ varies such that $c=0$ corresponds to no clustered dependence across observations but as a positive $c$ gets larger, the noise has a larger share of clustered errors but the variance of the overall regression errors remains the same regardless of the value of $c$. Figure~\ref{fig:ACS} shows the in-sample (train) vs. out-of-sample (test) mean squared error (MSE) for various values of $c \in \{0, 0.25, 0.5, 0.75\}$. 
It can be seen that the experimental results are almost identical across different values of $c$ especially when $p > n$, suggesting that the double descent phenomenon might be universal for various degrees of clustered dependence, provided that the overall variance of the regression errors remains the same. 
It is our main goal to provide a firm foundation for this empirical phenomenon.
To do so, we articulate the following research questions:
\begin{itemize}
    \item How to analyze the out-of-sample prediction risk of the ridgeless least squares estimator under \emph{general} assumptions on the regression errors?
    \item Why does \textit{not} the prediction risk seem to be affected by the degrees of dependence across observations?
\end{itemize}

To delve into the prediction risk, suppose that $\Sigma :=\mathbb{E}[x_0 x_0^\top]$ is finite and positive definite. Then, 
\begin{align*}
 &\mathbb{E} \Big[ (x_0^\top \hat{\beta}-x_0^\top \beta )^2 \mid x_1,\ldots,x_n \Big]
 =\mathbb{E} \Big[ (\hat{\beta}- \beta)^\top \Sigma (\hat{\beta}- \beta) \mid x_1,\ldots,x_n \Big].
\end{align*}
If $\Sigma = I$ (i.e., the case of isotropic features), where $I$ is the identity matrix, 
the mean squared error of the estimator defined by $\mathbb{E} [ \| \hat\beta-\beta \|^2 ]$, where $\| \cdot \|$ is the usual Euclidean norm, is the same as the expectation of the prediction risk defined above.
However, if $\Sigma \neq I$, the link between the two quantities is less intimate.
One may regard the prediction risk as the $\Sigma$-weighted mean squared error of the estimator; whereas
$\mathbb{E} [ \| \hat\beta-\beta \|^2 ]$ can be viewed as an ``unweighted'' version, even if $\Sigma \neq I$. In other words, regardless of the variance-covariance structure of the feature vector, $\mathbb{E} [ \| \hat\beta-\beta \|^2 ]$ treats each component of $\beta$ ``equally.'' 
The mean squared error of the estimator 
is arguably one of the most standard criteria to evaluate the quality of the estimator in statistics.
For instance, in the celebrated work by \citet{james:stein:61}, the mean squared error criterion is used to show that the sample mean vector is not necessarily optimal even for standard normal vectors
(so-called ``Stein's paradox''). Many follow-up papers used the same criterion; e.g., 
\citet{hansen2016risk} compared the mean-squared error of ordinary least squares, James–Stein, and Lasso estimators in an underparameterized regime. 
Both $\Sigma$-weighted and unweighted versions of the mean squared error are interesting objects to study. For example, 
\citet{Dobriban:Wager} called the former ``predictive risk'' and the latter ``estimation risk'' in high-dimensional linear models;
\citet{berthier2020tight} called the former ``generalization error'' and the latter 
``reconstruction error'' in the context of stochastic gradient descent for the least squares problem using the noiseless linear model. 
In this paper, we analyze both weighted and unweighted
mean squared errors of the ridgeless estimator under general assumptions on the data-generating processes, not to mention anisotropic features. Furthermore, our focus is on the finite-sample analysis, that is, both $p$ and $n$ are fixed but $p > n$.

Although most of the existing papers consider the simple setting as in (\ref{simple:dgp}), our work is not the first paper to consider more general regression errors in the overparameterized regime. 
\citet{Chinot2022AoS,chinot2020robustness} analyzed minimum norm interpolation estimators as well as regularized empirical risk minimizers in linear models without any conditions on the regression errors. Specifically, 
\citet{chinot2020robustness} showed that, with high probability, without assumption on the regression errors,
for the minimum norm interpolation estimator, 
$(\hat{\beta}- \beta)^\top \Sigma (\hat{\beta}- \beta)$ is bounded from above by 
$\left( \|\beta\|^2 \sum_{i \geq c \cdot n} \lambda_i(\Sigma) \vee \sum_{i=1}^n \varepsilon_i^2 \right) / n$, where $c$ is an absolute constant and $\lambda_i (\Sigma)$ is the eigenvalues of $\Sigma$ in descending order.
\citet{chinot2020robustness} also obtained the bounds on the estimation error
$(\hat{\beta}- \beta)^\top (\hat{\beta}- \beta)$.
Our work is distinct and complements these papers in the sense that we allow for a general variance-covariance matrix of the regression errors.
The main motivation of not making any assumptions on $\varepsilon_i$ in \citet{Chinot2022AoS} and \citet{chinot2020robustness} is to allow for potentially adversarial errors. We aim to allow for a general variance-covariance matrix of the regression errors to accommodate time series  and clustered data, which are common in applications. See, e.g., \citet{hansen2022} for a textbook treatment (see Chapter 14 for time series and Section 4.21 for clustered data).

The main contribution of this paper is that we provide \emph{exact finite-sample} characterization of the variance component of the prediction and estimation risks 
under the assumption that $X=[x_1,x_2,\cdots,x_n]^\top$ is \textit{left-spherical} (e.g., $x_i$'s can be i.i.d. normal with mean zero but more general); $\varepsilon_i$'s \textit{can be correlated and have non-identical variances}; 
and $\varepsilon_i$'s are independent of $x_i$'s.
Specifically, the variance term can be factorized into a product between two terms: one term depends only on the \emph{trace} of the variance-covariance matrix, say $\Omega$, of $\varepsilon_i$'s; the other term is solely determined by the distribution of $x_i$'s.
Interestingly, we find that although $\Omega$ may contain non-zero off-diagonal elements, only the trace of $\Omega$ matters, as hinted by Figure~\ref{fig:ACS}, and further demonstrate our finding via numerical experiments. 
In addition, we obtain exact finite-sample expression for the bias terms when the regression coefficients follow 
the random-effects hypothesis \citep{Dobriban:Wager}.
Our finite-sample findings offer a distinct viewpoint on the prediction and estimation risks, contrasting with the asymptotic inverse relationship (for optimally chosen ridge estimators) between the predictive and estimation risks uncovered by \citet{Dobriban:Wager}.
Finally, we connect our findings to the existing results on the prediction risk \citep[e.g.,][]{hastie2022surprises} by considering the asymptotic behavior of estimation risk. 

One of the limitations of our theoretical analysis is that the design matrix $X$ is assumed to be left-spherical, although it is more general than i.i.d. normal with mean zero. We not only view this as a convenient assumption but also expect that our findings will hold at least approximately even if $X$ does not follow the left-spherical distribution. It is a topic for future research to formally investigate this conjecture.

\section{The Framework under General Assumptions on Regression Errors}

We first describe the minimum $\ell_2$ norm (ridgeless) interpolation least squares estimator in the overparameterized case ($p>n$).
Define
\begin{align*}
    y &:= [y_1,y_2,\cdots,y_n]^\top\in\R^n, \; \ \\
    \varepsilon &:= [\varepsilon_1,\varepsilon_2,\cdots,\varepsilon_n]^\top\in\R^n, \; \ \\
    X^\top &:= \begin{bmatrix}
    x_1, x_2, \cdots, x_n
    \end{bmatrix}\in\R^{p\times n},
\end{align*}
so that $y = X \beta + \varepsilon$.
The estimator we consider is 
\begin{align*}
\hat \beta 
&:=\argmin_{b\in\R^p}\{\|b\|: Xb=y\}=(X^\top X)^\dagger X^\top y=X^\dagger y,
\end{align*}
where $A^\dagger$ denotes the Moore–Penrose inverse of a matrix $A$.

The main object of interest in this paper is the prediction and estimation risks of $\hat \beta$
under the data scenario such that 
the regression error 
$\varepsilon_i$ may \textit{not} be i.i.d.
Formally, we make the following assumptions.
\begin{assumption}\label{assump:DGP}
(i) $y = X \beta + \varepsilon$, where
$\varepsilon$ is independent of $X$,
and
$\mathbb{E} [ \varepsilon ] = 0$. 
(ii) $\Omega := \mathbb{E} [ \varepsilon \varepsilon^\top]$ is finite and positive definite (but not necessarily spherical).
\end{assumption}

We emphasize that Assumption~\ref{assump:DGP} is more general than the standard assumption in the literature on benign overfitting that typically assumes that $\Omega \equiv \sigma^2 I$.
Assumption~\ref{assump:DGP} allows for non-identical variances across the elements of $\varepsilon$ because the diagonal elements of 
$\Omega$ can be different among each other. Furthermore, it allows for non-zero off-diagonal elements in $\Omega$. It is difficult to assume that
the regression errors are independent among each other with time series or clustered data; thus, in these settings, it is important to allow for general $\Omega \neq \sigma^2 I$.  
Below we present a couple of such examples.

\begin{example}[AR(1) Errors]\label{ex:ar}
Suppose that the regressor error follows an autoregressive process:
\begin{align}\label{eq:ar}
    \varepsilon_i = \rho \varepsilon_{i-1} + \eta_i, 
\end{align}
where $\rho \in (-1,1)$ is an autoregressive parameter,   
$\eta_i$ is independent and identically distributed with mean zero and variance $\sigma^2 (0 < \sigma^2 < \infty)$ and is independent of $X$. Then, the $(i,j)$ element of $\Omega$ is 
\begin{align*}
\Omega_{ij} = \frac{\sigma^2}{1-\rho^2} \rho^{|i-j|}.    
\end{align*}
Note that $\Omega_{ij} \neq 0$ as long as $\rho \neq 0$.
\end{example}

\begin{example}[Clustered Errors]\label{ex:cluster}
Suppose that regression errors are mutually independent across clusters but they can be arbitrarily correlated within the same cluster. For instance, students in the same school may affect each other and also have the same teachers; thus it would be difficult to assume independence across student test scores within the same school. However, it might be reasonable that student test scores are independent across different schools.
For example, assume that (i) if the regression error 
$\varepsilon_i$ belongs to cluster $g$,
where $g = 1,\ldots,G$ and $G$ is the number of clusters,
$\mathbb{E} [\varepsilon_i^2 ] = \sigma_g^2$ for some constant $\sigma_g^2 > 0$ that can vary over $g$;
(ii) if the regression errors 
$\varepsilon_i$ and $\varepsilon_j$ $(i \neq j)$ belong to the same cluster $g$,
$\mathbb{E} [\varepsilon_{i} \varepsilon_{j} ] = \rho_g$  for some constant $\rho_g \neq 0$ that can be different across $g$;
and
(iii) if the regression errors 
$\varepsilon_i$ and $\varepsilon_j$ $(i \neq j)$ do not belong to the same cluster,
$\mathbb{E} [\varepsilon_{i} \varepsilon_{j} ] = 0$.
Then, $\Omega$ is block diagonal with possibly non-identical blocks.
\end{example}

For vector $a$ and square matrix $A$, let $\| a \|_A^2 := a^\top A a$. Conditional on $X$ and given $A$,
we define
\begin{align*}
\mathrm{Bias}_A(\hat \beta \mid X) := \| \mathbb{E}[\hat\beta \mid X] -\beta \|_A
\;\;
\text{ and } \;\;
\mathrm{Var}_A(\hat \beta \mid X) := \Tr(\Cov(\hat\beta\mid X)A),
\end{align*}
and we write $\Var=\Var_I$ and $\mathrm{Bias}=\mathrm{Bias}_I$ for the sake of brevity in notation.

The mean squared prediction error for 
an unseen test observation $x_0$ with the positive definite covariance matrix $\Sigma := \E[x_0 x_0^\top]$ (assuming that $x_0$ is independent of the training data $X$)
and
the mean squared estimation error of $\hat \beta$ conditional on $X$ can be written as:
\begin{align*}
R_P (\hat \beta \mid X) 
&:= 
\mathbb{E} \big[ (x_0^\top\hat\beta-x_0^\top\beta)^2 \mid X \big] 
= [\mathrm{Bias}_\Sigma(\hat \beta \mid X)]^2 
+ \mathrm{Var}_\Sigma(\hat \beta \mid X),\\
R_E (\hat \beta \mid X) 
&:= 
\mathbb{E} \big[ \| \hat\beta-\beta \|^2 \mid X \big] 
= [\mathrm{Bias}(\hat \beta \mid X)]^2 
+ \mathrm{Var}(\hat \beta \mid X).
\end{align*}




In what follows, we obtain exact finite-sample expressions for prediction and estimation risks:
\begin{align*}
R_P (\hat \beta) := \mathbb{E}_X [ R_P (\hat \beta \mid X) ]
\; \; \text{ and } \; \;
R_E (\hat \beta) := \mathbb{E}_X [ R_E (\hat \beta \mid X) ].
\end{align*}
We first analyze the variance terms for both risks and then study the bias terms.

\section{The Variance Components of Prediction and Estimation Risks}


\subsection{The variance component of prediction risk}
We rewrite the variance component of prediction risk as follows:
\begin{align}\label{eq:varsigma}
    \Var_\Sigma(\hat\beta\mid X) = \Tr(\Cov(\hat\beta\mid X)\Sigma)
    &= \Tr(X^\dagger\Omega X^{\dagger \top}\Sigma ) 
    = \|SX^\dagger T\|_F^2,
\end{align}
where positive definite symmetric matrices $S:=\Sigma^{1/2}$ and $T:=\Omega^{1/2}$ are the square root matrices of the positive definite matrices $\Sigma$ and $\Omega$, respectively.
To compute the above Frobenius norm of the matrix $SX^{\dagger}T$, we need to compute the alignment of the right-singular vectors 
of $B:=SX^{\dagger}\in\R^{p\times n}$ with the left-eigenvectors 
of $T\in\R^{n\times n}$.
Here, $B$ is a random matrix while $T$ is fixed.
Therefore, we need the distribution of the right-singular vectors
of the random matrix $B$.

Perhaps surprisingly, to compute the \textit{expected} variance $\E_X[\Var_\Sigma(\hat\beta\mid X)]$,
it turns out that we do not need the distribution of the singular vectors if we make a minimal assumption (the \textit{left-spherical symmetry} of $X$) which is
weaker than the assumption that $\{x_i\}_{i=1}^n$ is i.i.d. normal with $\E[x_1]=0$.
\begin{definition}[Left-Spherical Symmetry \citep{dawid1977spherical,dawid1978extendibility,dawid1981some,gupta1999matrix}]
    A random matrix $Z$ or its distribution is called to be \textit{left-spherical} if $OZ$ and $Z$ have the same distribution ($OZ\deq Z$) for any fixed orthogonal matrix $O\in O(n):=\{A\in\R^{n\times n}: AA^\top = A^\top A =I\}$.
\end{definition}
\begin{assumption}\label{assump:exch}
    The design matrix $X$ is left-spherical.  
\end{assumption}


For the isotropic error case ($\Omega=I$), we have $\E_X[\Var_\Sigma(\hat\beta\mid X)]=\E_X[\Tr((X^\top X)^\dagger \Sigma)]$ directly from \eqref{eq:varsigma} since $X^\dagger X^{\dagger\top}=(X^\top X)^\dagger$.
Moreover, for the arbitrary error, the left-spherical symmetry of $X$ 
plays a critical role to \textit{factor out} the same $\E_X[\Tr((X^\top X)^\dagger \Sigma)]$ and the trace of the variance-covariance matrix of the regression errors, $\Tr(\Omega)$,
from the variance after the expectation over $X$.
\begin{lemma}\label{lem:OZ}
    For a subset $\gS\subset \R^{m\times m}$ satisfying $C^{-1}\in \gS$ for all $C\in \gS$, if matrix-valued random variables $Z$ and $AZ$ have the same distribution measure $\mu_Z$ for any $A \in \gS$, 
    then we have
    $$\E_Z[f(Z)]=\E_{Z}[f(AZ)]=\E_{Z}[\E_{A'\sim \nu}[f(A'Z)]]$$
    for any function $f\in L^1(\mu_Z)$ and any probability density function $\nu$
    on $\gS$.
\end{lemma}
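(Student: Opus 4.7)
The plan is to verify each of the two equalities separately. The first equality $\E_Z[f(Z)] = \E_Z[f(AZ)]$, for each fixed $A \in \gS$, is immediate from the hypothesis $AZ \deq Z$: the pushforward of $\mu_Z$ under the map $z \mapsto Az$ coincides with $\mu_Z$ itself, so the change-of-variables formula yields equal integrals of any $f \in L^1(\mu_Z)$.

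For the second equality, I would apply Fubini's theorem to swap the order of integration:
\begin{equation*}
    \E_Z\bigl[\E_{A' \sim \nu}[f(A'Z)]\bigr] \;=\; \E_{A' \sim \nu}\bigl[\E_Z[f(A'Z)]\bigr] \;=\; \E_{A' \sim \nu}\bigl[\E_Z[f(Z)]\bigr] \;=\; \E_Z[f(Z)],
\end{equation*}
where the second equality invokes the first part pointwise in $A' \in \gS$, and the final equality uses that the integrand no longer depends on $A'$ while $\nu$ is a probability measure on $\gS$.

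The one step requiring genuine care is the justification of Fubini, which needs joint $(\nu \otimes \mu_Z)$-integrability of $(A', z) \mapsto f(A' z)$. Fix $A' \in \gS$; since $A' Z \deq Z$, we have
\begin{equation*}
    \int |f(A' z)|\, \mu_Z(dz) \;=\; \int |f(z)|\, \mu_Z(dz) \;<\; \infty.
\end{equation*}
Integrating this (constant in $A'$) against the probability measure $\nu$ yields a finite bound, so Tonelli's theorem supplies the absolute integrability required to invoke Fubini above.

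If any step is the main obstacle, it is conceptual rather than technical: one must recognize that the additional randomization over $A' \sim \nu$ can be inserted freely, precisely because each $A' Z$ individually shares the law $\mu_Z$, so the mixture over $\nu$ still has the same one-dimensional marginal. The structural hypothesis that $\gS$ be closed under inversion does not appear to enter the argument for this lemma itself, and is presumably imposed for subsequent applications (for instance, to run the invariance argument in both directions or to ensure a group structure on $\gS$).
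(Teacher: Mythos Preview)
Your proof is correct and the overall structure matches the paper's: establish $\E_Z[f(Z)]=\E_Z[f(AZ)]$ for each fixed $A\in\gS$, then interchange the $\nu$- and $\mu_Z$-integrals by Fubini. The difference lies in the first step. The paper argues through the inverse: it invokes $A^{-1}\in\gS$ to get $Z\deq A^{-1}Z=:\tilde Z$, then writes $\E_Z[f(Z)]=\E_{\tilde Z}[f(A\tilde Z)]=\E_Z[f(AZ)]$. You instead use the hypothesis $AZ\deq Z$ directly, via the pushforward identity $\int f(Az)\,\mu_Z(dz)=\int f(w)\,(A_*\mu_Z)(dw)=\int f(w)\,\mu_Z(dw)$. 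Your route is the more economical one, and your closing remark is on point: the closure-under-inversion assumption is not actually used in the lemma as stated. You also supply the Tonelli check that $\int |f(A'z)|\,\mu_Z(dz)=\|f\|_{L^1(\mu_Z)}$ is constant in $A'$, which the paper leaves implicit; this is a welcome addition since it is precisely what licenses the Fubini swap.
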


\begin{theorem}\label{thm:main2}
    Let Assumptions
    ~\ref{assump:DGP}, and ~\ref{assump:exch} hold.
    Then,
    we have
    \begin{align*}
        \E_X[\Var_\Sigma(\hat\beta\mid X)]
        &=\frac{1}{n} \Tr(\Omega)\E_X[\Tr((X^\top X)^{\dagger}\Sigma)].
    \end{align*}
\end{theorem}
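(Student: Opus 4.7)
The plan is to exploit the left-spherical symmetry of $X$ by averaging against the Haar measure on $O(n)$. Writing $f(X) := \Tr(X^\dagger \Omega X^{\dagger\top}\Sigma)$ for the conditional variance identified in \eqref{eq:varsigma}, and noting that $X \deq OX$ for every fixed $O \in O(n)$, I would apply Lemma~\ref{lem:OZ} with $\gS = O(n)$ and $\nu$ the normalized Haar measure on $O(n)$ to obtain
\begin{align*}
\E_X[f(X)] = \E_X\!\big[\E_O[f(OX)]\big],
\end{align*}
where $O$ is Haar-distributed and independent of $X$. The point of this manoeuvre is that after the inner averaging the $\Omega$-dependence will collapse into the scalar $\Tr(\Omega)$, leaving only $\Tr((X^\top X)^\dagger \Sigma)$ inside the outer expectation.

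For the inner computation I would first record the identity $(OX)^\dagger = X^\dagger O^\top$ (immediate from the four Moore--Penrose conditions, or from the SVD $X = U\Lambda V^\top$). Hence
\begin{align*}
f(OX) = \Tr\!\big(X^\dagger O^\top \Omega\, O\, X^{\dagger\top}\Sigma\big),
\qquad
\E_O[f(OX)] = \Tr\!\big(X^\dagger\, \E_O[O^\top \Omega O]\, X^{\dagger\top}\Sigma\big).
\end{align*}
The main step is then to show that $M := \E_O[O^\top \Omega O] = \tfrac{1}{n}\Tr(\Omega)\, I$. By right-invariance of the Haar measure, for every fixed $Q \in O(n)$ one has $Q^\top M Q = \E_O[(OQ)^\top \Omega (OQ)] = M$, so $M$ commutes with every orthogonal matrix. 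A standard Schur-type argument (irreducibility of the defining representation of $O(n)$ on $\R^n$) then forces $M = c I$, and taking traces pins down $c = \Tr(\Omega)/n$.

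Combining the pieces gives $\E_O[f(OX)] = \tfrac{1}{n}\Tr(\Omega)\,\Tr(X^\dagger X^{\dagger\top}\Sigma) = \tfrac{1}{n}\Tr(\Omega)\,\Tr((X^\top X)^\dagger \Sigma)$, where the last equality uses $X^\dagger X^{\dagger\top} = (X^\top X)^\dagger$ (read off directly from the SVD). Taking $\E_X$ on both sides and invoking Lemma~\ref{lem:OZ} yields the claimed identity. The step that most requires care is the Haar computation of $M$; once one has $M$ commuting with all of $O(n)$ and hence a scalar multiple of the identity, the remainder is bookkeeping. A minor technical nuisance is that Lemma~\ref{lem:OZ} is phrased in terms of a probability density on a subset of $\R^{m\times m}$, whereas the Haar measure on the compact group $O(n)$ is singular with respect to Lebesgue measure; I would read the lemma broadly as covering any invariant probability measure on $\gS$, or equivalently apply Fubini directly to $\E_X[\E_O[f(OX)]]$ using $OX \deq X$ pointwise in $O$.
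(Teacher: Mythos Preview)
Your proof is correct and uses the same high-level idea as the paper---Haar averaging via Lemma~\ref{lem:OZ} combined with the identity $(OX)^\dagger = X^\dagger O^\top$---but the execution is noticeably more direct. The paper takes the SVD route: it writes $\|SX^\dagger T\|_F^2$ in terms of the eigenvalue vectors $a(X)=\lambda((X^\top X)^\dagger\Sigma)$ and $b=\lambda(\Omega)$ coupled through an alignment matrix $\Gamma(X)_{ij}=\langle V_{:i},(U_T)_{:j}\rangle^2$, then shows that Haar-averaging the alignment gives the constant doubly stochastic matrix $\frac{1}{n}J$. You instead stay at the matrix level, pull the Haar average inside the trace, and reduce everything to the single computation $\E_O[O^\top\Omega O]=\frac{1}{n}\Tr(\Omega)\,I$. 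This bypasses all the SVD bookkeeping; the Schur/commutant argument you give is the cleanest way to see it and is entirely standard. Conceptually the two arguments are equivalent (your identity, restricted to the relevant singular-vector basis, is exactly what forces $\bar\Gamma=\frac{1}{n}J$), but yours is shorter and makes the mechanism---that only $\Tr(\Omega)$ can survive orthogonal conjugation---transparent. The caveat you raise about Lemma~\ref{lem:OZ} being phrased with a ``density'' is well-spotted but harmless: the paper itself applies the lemma with Haar measure in its own proof, so the intended reading is any probability measure on $\gS$, and as you note Fubini plus $OX\deq X$ suffices directly.
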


\begin{proof}[Sketch of Proof]

With $B=\Sigma^{1/2}X^\dagger$ and $T=\Omega^{1/2}$, we can rewrite the variance as follows:
\begin{align*}
    \Var_\Sigma(\hat \beta\mid X)=\|B T\|_F^2 = \|U D V^\top U_TD_TV_T^\top \|_F^2{=\|DV^\top U_TD_T \|_F^2}
\end{align*}
from the singular value decompositions $B=UDV^\top$ and $T=U_TD_T V_T^\top$ with orthogonal matrices $U,V,U_T,V_T$, and diagonal matrices $D,D_T$.
Then, we need to compute the alignment $V^\top U_T$ of the right-singular vectors of $B$ with the left-eigenvectors of $T$ because
\begin{align*}
    {\|DV^\top U_TD_T \|_F^2}
    &= 
    \lambda\left((X^\top X)^\dagger\Sigma\right)^\top
    \Gamma(X)
    \lambda(\Omega)
    = a(X)^\top\Gamma(X) b,  
\end{align*}
where $v^{(i)}:=V_{:i}$, $u^{(j)}:=(U_T)_{:j}$, $\gamma_{ij}:=
\langle v^{(i)},u^{(j)}\rangle^2\geq 0$, $\Gamma(X):=(\gamma_{ij})_{i,j}\in\R^{n\times n}$
and $\lambda(A)\in\R^n$ is a vector where its elements are the eigenvalues of $A$.

Now, we want to compute the expected variance. To do so, from Lemma \ref{lem:OZ} with $\gS=O(n)$ and the left-spherical symmetry of $X$, we can obtain
\begin{align*}
    \E_{X}[a(X)^\top\Gamma(X) b]
    &=\E_{X}\left[\E_{O\sim \nu}[a(OX)^\top\Gamma(OX) b]\right]
    =\E_{X}\left[a(X)^\top\E_{O\sim\nu}[\Gamma(OX)] b\right],
\end{align*}
where $\nu$ is the unique uniform distribution (the Haar measure) over the orthogonal matrices $O(n)$. 

Here, we can show that $\E_{O\sim\nu}[\Gamma(OX)] =\frac1n J$,
where $J$ is the all-ones matrix with
$J_{ij}=1 (i,j=1,2,\cdots, n)$.
Therefore, we have the expected variance as follows:
\begin{align*}
    \E_X[\Var_\Sigma(\hat\beta\mid X)] 
    &= \E_X\left[a(X)^\top\frac1n J b\right] 
    = \frac{1}{n}\sum_{i,j=1}^n\E_X[a_i(X)]b_j
    =\frac{1}{n}\E_X[\Tr((X^\top X)^\dagger\Sigma)]\Tr(\Omega).
\end{align*}    
\end{proof}

The proofs of Lemma~\ref{lem:OZ} and Theorem~\ref{thm:main2} are in the supplementary appendix.

\subsection{The variance component of estimation risk}
For the expected variance $\E_X[\Var(\hat\beta\mid X)]$ of the estimation risk, a similar argument still holds if plugging-in $B=X^\dagger$ instead of $B=\Sigma^{1/2}X^\dagger$.

\begin{theorem}\label{thm:main}
    Let Assumptions
    ~\ref{assump:DGP}, and~\ref{assump:exch} hold.
    Then,
    we have
    \begin{align*}
        \E_X[\Var(\hat\beta\mid X)]
        &=\frac{1}{np} \Tr(\Omega)\E_X[\Tr(\Lambda^{\dagger})],
    \end{align*}
    where 
    $XX^\top/p= U\Lambda U^\top$ for some orthogonal matrix $U\in O(n)$. 
\end{theorem}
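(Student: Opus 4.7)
My plan is to mirror the proof of Theorem~\ref{thm:main2} almost verbatim, specialized to the case $\Sigma = I$, and then convert the resulting $\Tr((X^\top X)^\dagger)$ into $p^{-1}\Tr(\Lambda^\dagger)$ by passing to the $n \times n$ dual Gram matrix. Concretely, from $\hat\beta = X^\dagger y$ I get $\Cov(\hat\beta\mid X) = X^\dagger \Omega X^{\dagger\top}$, so
\[
\Var(\hat\beta\mid X) = \Tr\!\bigl(X^\dagger \Omega X^{\dagger\top}\bigr) = \|X^\dagger T\|_F^2, \qquad T := \Omega^{1/2},
\]
which is exactly \eqref{eq:varsigma} with $S$ replaced by the identity, i.e.\ with $B := X^\dagger$ in place of $B = SX^\dagger$. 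Inserting the SVDs $X^\dagger = UDV^\top$ and $T = U_T D_T V_T^\top$ and repeating the Frobenius expansion from the proof of Theorem~\ref{thm:main2} yields the same bilinear form
\[
\Var(\hat\beta\mid X) = a(X)^\top \Gamma(X)\, b,
\]
where now $a(X) = \lambda\!\bigl((X^\top X)^\dagger\bigr)$, $b = \lambda(\Omega)$, and the alignment matrix $\Gamma(X)_{ij} = \langle V_{:i}, (U_T)_{:j}\rangle^2$ is doubly stochastic.

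Next, by Assumption~\ref{assump:exch} and Lemma~\ref{lem:OZ} with $\gS = O(n)$ and $\nu$ the Haar measure on $O(n)$, I replace $X$ by $OX$ and average over $O$:
\[
\E_X[\Var(\hat\beta\mid X)] = \E_X\!\left[a(X)^\top\, \E_{O\sim\nu}[\Gamma(OX)]\, b\right].
\]
The identity $(OX)^\dagger = X^\dagger O^\top$ shows that $X \mapsto OX$ only rotates the right-singular vectors $V \mapsto OV$ while leaving the singular values of $X$ (and hence $a(X)$) invariant. The same Haar-invariance argument used in Theorem~\ref{thm:main2}—choose $O^{(j)}$ whose first column is $u^{(j)}$, deduce that $\E_{O\sim\nu}[\Gamma(OX)_{ij}]$ is independent of $j$, then combine with double stochasticity—forces $\E_{O\sim\nu}[\Gamma(OX)] = (1/n)J$ with $J$ the $n\times n$ all-ones matrix. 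Therefore
\[
\E_X[\Var(\hat\beta\mid X)] = \frac{1}{n}\,\Tr(\Omega)\,\E_X\!\left[\Tr\!\bigl((X^\top X)^\dagger\bigr)\right].
\]

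Finally, I convert this trace into the claimed form. Since $X^\top X$ and $XX^\top$ share the same nonzero spectrum, so do their Moore--Penrose inverses, giving $\Tr((X^\top X)^\dagger) = \Tr((XX^\top)^\dagger)$. Writing $XX^\top = p\,U\Lambda U^\top$ yields $(XX^\top)^\dagger = p^{-1}\,U\Lambda^\dagger U^\top$, so $\Tr((X^\top X)^\dagger) = p^{-1}\Tr(\Lambda^\dagger)$, producing the $1/(np)$ prefactor in the theorem.

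The main obstacle is really just bookkeeping: because Theorem~\ref{thm:main2} already packages both the Haar-invariance computation and the doubly-stochastic trick, no new technical machinery is required, and the expansion above reduces to substituting $S = I$. The only place that deserves care is the final scaling, where I must keep the factor $p$ from $XX^\top = p\,U\Lambda U^\top$ correctly and invoke the common-nonzero-spectrum identity for $(X^\top X)^\dagger$ and $(XX^\top)^\dagger$. This reformulation through $\Lambda$ is not merely cosmetic: in the overparameterized regime $p > n$ the matrix $(X^\top X)^\dagger$ has at most $n$ nonzero eigenvalues, so expressing the answer through the $n\times n$ spectral object $\Lambda^\dagger$ yields a finite-dimensional quantity whose dependence on the empirical Gram matrix $XX^\top/p$ is the natural input for subsequent random-matrix-theory analysis.
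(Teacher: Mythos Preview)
Your proposal is correct and follows exactly the route the paper indicates: the paper's entire proof of Theorem~\ref{thm:main} is the one-line remark that ``a similar argument still holds if plugging-in $B=X^\dagger$ instead of $B=SX^\dagger$,'' and you have carried out precisely that substitution together with the final scaling $\Tr((X^\top X)^\dagger)=\Tr((XX^\top)^\dagger)=p^{-1}\Tr(\Lambda^\dagger)$ to obtain the $1/(np)$ prefactor.
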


\begin{figure*}[!t]
    \centering
    \includegraphics[width=.49\textwidth]{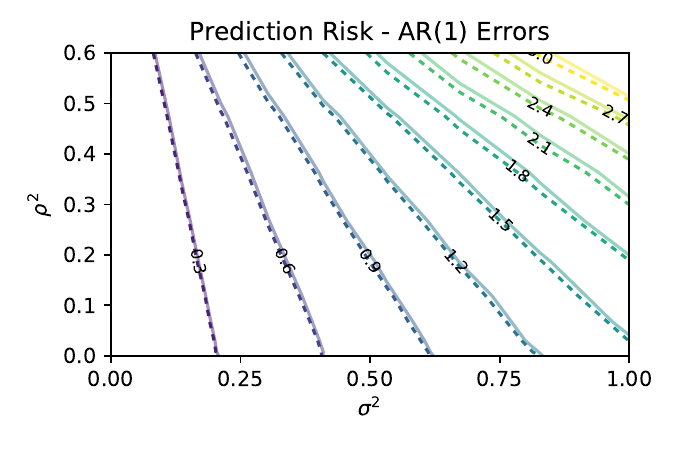}
    \includegraphics[width=.49\textwidth]{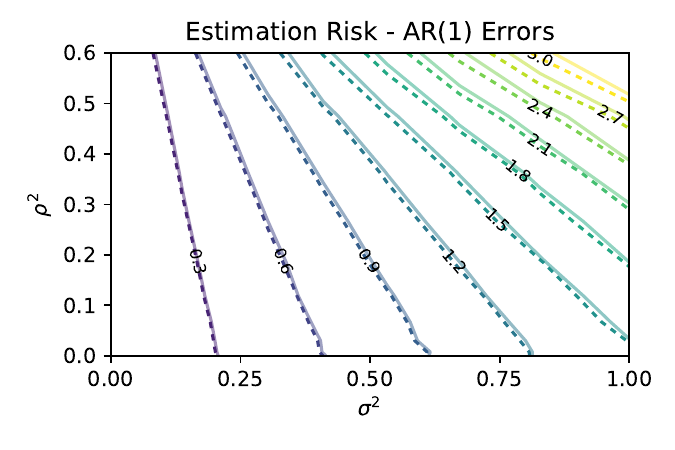}
    \caption{
    {Our theory (dashed lines) matches the expected variances (solid lines)
    of the prediction (left) and estimation risks (right) in Example \ref{ex:ar} (AR(1) Errors).    
    Each point $(\sigma^2,\rho^2)$ represents a different noise covariance matrix $\Omega$, but with the same $\Tr(\Omega)$ along each line $\{(\sigma^2,\rho^2): \sigma^2/\kappa^2+\rho^2 = 1\}$ for some $\kappa^2>0$, they have the same expected variance.
    }
    We set $n=50,p=100$, and evaluate on 100 samples of $X$ and 100 samples of $\varepsilon$ (for each realization of $X$) to approximate the expectations.
    }
    \label{fig:AR}
\end{figure*}

\subsection{Numerical experiments}
In this section, we validate our theory with some numerical experiments of Examples \ref{ex:ar} and \ref{ex:cluster}, especially how the expected variance is related to the general covariance $\Omega$ of the regressor error $\varepsilon$.
In the both examples, we sample $\{x_i\}_{i=1}^n$ from $\normal(0,\Sigma)$ with a general feature covariance $\Sigma=U_{\Sigma} D_{\Sigma} U_{\Sigma}^\top$ for an orthogonal matrix $U_{\Sigma} \in O(p)$ and a diagonal matrix $D_{\Sigma}\succ 0$. In this setting, we have $\mathrm{rank}(XX^\top)=n$ and $\Lambda^\dagger=\Lambda^{-1}$ almost everywhere.

\subsubsection{AR(1) Errors}
As shown in Example \ref{ex:ar},
when the regressor error follows an autoregressive process in \eqref{eq:ar}, we have
$\Omega_{ij}=\sigma^2\rho^{|i-j|}/(1-\rho^2)$ and
$\Tr(\Omega)/n=\sigma^2/(1-\rho^2)$.
Therefore, for pairs of $(\sigma^2,\rho^2)$ with the same $\Tr(\Omega)/n$, they are expected to yield the same variances of the prediction and estimation risk
from Theorem \ref{thm:main2} and \ref{thm:main} even though they have different off-diagonal elements in $\Omega$.
To be specific, 
the pairs $(\sigma^2,\rho^2)$ on a line $\{(\sigma^2,\rho^2): \sigma^2/\kappa^2+\rho^2 = 1\}$ have the same $\Tr(\Omega)/n$ and the same expected variance which gets larger for the line with respect to a larger $\kappa^2$.

Figure \ref{fig:AR} (left)
shows the contour plots of 
$\E_X[\Var_\Sigma(\hat\beta\mid X)]$ and $\frac{1}{n}\Tr(\Omega)\E_X[\Tr((X^\top X)^\dagger \Sigma)]$ 
for different pairs of $(\sigma^2, \rho^2)$ in Example \ref{ex:ar}.
They have different slopes $-\kappa^{-2}$ according to the value of $\kappa^2=\Tr(\Omega)/n$.
The right panel shows equivalent contour plots for estimation risk.


\subsubsection{Clustered Errors}
Now consider the block diagonal covariance matrix $\Omega=\diag(\Omega_1,\Omega_2,\cdots,\Omega_G)$ in Example \ref{ex:cluster},
where $\Omega_g$ is an $n_g\times n_g$ matrix with $(\Omega_g)_{ii} = \sigma_g^2$ and $(\Omega_g)_{ij} = \rho_g$ ($i\neq j$) for each $i, j=1,2,\cdots,n_g$ and $g=1,2,\cdots,G$.
Let $n = \sum_{g=1}^G n_g$.
We then have $\Tr(\Omega)/n=\sum_{g=1}^G \Tr(\Omega_g)/n=\sum_{g=1}^G(n_g/n) \sigma_g^2$.
Therefore, given a partition $\{n_g\}_{g=1}^G$ of the $n$ observations, the covariance matrices $\Omega$ with different $\{\sigma^2_g\}_{g=1}^G$ have the same $\Tr(\Omega)/n$ if $(\sigma_1^2,\sigma_2^2,\cdots,\sigma_G^2)\in\R^G$ are on the same hyperplane $\frac{n_1}{n}\sigma_1^2+\frac{n_2}{n}\sigma_2^2+\cdots+\frac{n_G}{n}\sigma_G^2=\kappa^2$ for some $\kappa^2>0$. 

\begin{figure*}[!t]
    \centering
    \includegraphics[width=.48\textwidth]{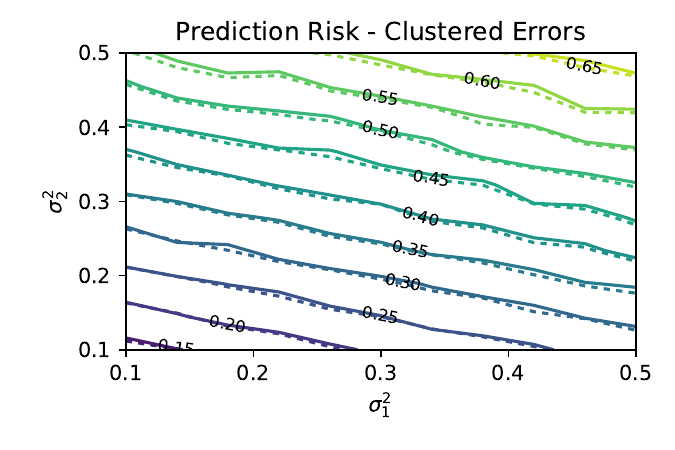}
    \includegraphics[width=.48\textwidth]{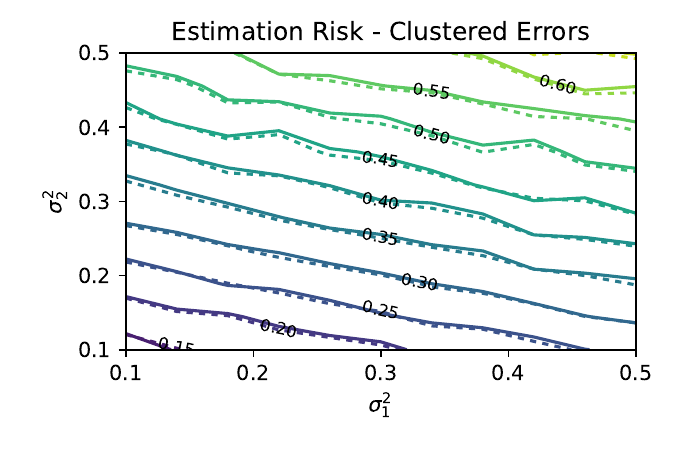}
    \caption{    
    {Our theory (dashed lines) matches the expected variances (solid lines)
    of the prediction (left) and estimation risks (right) in Example \ref{ex:cluster} (Clustered Errors).
    Each point $(\sigma^2,\rho^2)$ represents a different noise covariance matrix $\Omega$, but with the same $\Tr(\Omega)$ along each line $\{(\sigma_1^2,\sigma_2^2): \frac{n_1}{n}\sigma_1^2+\frac{n_2}{n}\sigma_2^2 = \kappa^2\}$ for some $\kappa^2>0$, they have the same expected variance.}
    We set $G=2,(n_1=5,n_2=15),n=20,p=40,\rho_1=\rho_2=0.05$, and evaluate on 100 samples of $X$ and 100 samples of $\varepsilon$ (for each realization of $X$) to approximate the expectations.
    }
    \label{fig:Group}
\end{figure*} 

Figure \ref{fig:Group} (left)
shows the contour plots of 
$\E_X[\Var_\Sigma(\hat\beta\mid X)]$ and $\frac{1}{n}\Tr(\Omega)\E_X[\Tr((X^\top X)^\dagger \Sigma)]$ for different pairs of $(\sigma_1^2,\sigma_2^2)$ for a simple two-clusters example ($G=2$) of Example \ref{ex:cluster} with $(n_1,n_2)=(5,15)$.
Here, we use a fixed value of $\rho_1=\rho_2=0.05$, but the results are the same regardless of their values, as shown in the appendix.
Unlike Example \ref{ex:ar}, the hyperplanes are orthogonal to $v=[n_1,n_2]^\top$ regardless of the value of $\kappa^2=\Tr(\Omega)/n$.
Again, the right panel shows equivalent contour plots for estimation risk.

\section{The Bias Components of Prediction and Estimation Risks}

Our main contribution is to allow for general assumptions on the regression errors, and thus the bias parts remain the same as they do not change with respect to the regression errors.
For completeness, in this section, we briefly summarize the results on the bias components.
First, we make the following assumption for a constant rank deficiency of $X^\top X$ which holds, for example, each $x_i$ has a positive definite covariance matrix and is independent of each other.
\begin{assumption}\label{assump:rank}
$\mathrm{rank}(X) = n$ almost everywhere.
\end{assumption}
\subsection{The bias component of prediction risk}\label{subsec:bias_pr}

The bias term of prediction risk can be expressed as follows:
\begin{align}\label{eq:bias_pr}
    [\mathrm{Bias}_\Sigma(\hat\beta\mid X)]^2
    &=(S\beta)^\top \lim_{\lambda\searrow 0} \lambda^2 (S^{-1}\hat\Sigma S+\lambda I)^{-2} S\beta,
\end{align}
where $\hat\Sigma := X^\top X/n$.
Now, in order to obtain an exact closed form solution, we make the following assumption:
\begin{assumption}\label{assump:random-effects2}
 $\E_\beta[S \beta(S\beta)^\top]=r_\Sigma^2I/p$,
where $r_\Sigma^2 := \E_\beta[\|\beta\|_\Sigma^2]<\infty$ and $\beta$ is independent of $X$.
\end{assumption}
A similar assumption (see Assumption \ref{assump:random-effects}) has been shown to be useful to obtain closed-form expressions in the literature \citep[e.g.,][]{Dobriban:Wager,richards2021asymptotics,li2021asymptotic,chen2023sketched}.

Under this assumption,
since $[\mathrm{Bias}_\Sigma(\hat\beta\mid X)]^2 = \Tr[S\beta(S\beta)^\top \lim_{\lambda\searrow 0} \lambda^2 (S^{-1}\hat\Sigma S+\lambda I)^{-2}]$ from \eqref{eq:bias_pr}, we have the expected bias (conditional on $X$) as follows:
\begin{align*}
    \E_\beta[\mathrm{Bias}_\Sigma(\hat\beta\mid X)^2 \mid X]
    &=\frac{r_\Sigma^2}{p}\lim_{\lambda\searrow 0} \sum_{i=1}^p \frac{\lambda^2}{(\tilde s_i+\lambda)^2}
    =\frac{r_\Sigma^2}{p}\left| \{i\in[p]:\tilde s_i=0\}\right|
    =r_\Sigma^2\frac{p-n}{p},
\end{align*}
where $\tilde s_i$ are the eigenvalues of $S^{-1}\hat\Sigma S\in\R^{p\times p}$ and $\mathrm{rank}(S^{-1}\hat\Sigma S)
=\mathrm{rank}(X)=n$ almost everywhere under Assumption \ref{assump:rank}.
This bias is independent of the distribution of $X$
or the spectral density of $S^{-1}\hat \Sigma S$, 
but only depending on the rank deficiency of the realization
of $X$.

Finally, the prediction risk $R_P(\hat\beta)$ can be summarized 
as follows:
\begin{corollary}
    Let Assumptions
    ~\ref{assump:DGP},~\ref{assump:exch}, 
    ~\ref{assump:rank},
    and ~\ref{assump:random-effects2}    
    hold. Then, we have
    \begin{align*}
      R_P (\hat \beta)
       &
       = r_\Sigma^2 \left( 1 - \frac{n}{p} \right) + \frac{\Tr(\Omega)}{n}\E_X\left[\Tr((X^\top X)^\dagger\Sigma)\right].
    \end{align*}
\end{corollary}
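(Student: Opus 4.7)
The plan is to combine the bias--variance decomposition of the prediction risk with the two components already computed in Theorem~\ref{thm:main2} (variance) and in Section~\ref{subsec:bias_pr} (bias). Concretely, starting from
\[
R_P(\hat\beta \mid X) = [\mathrm{Bias}_\Sigma(\hat\beta\mid X)]^2 + \Var_\Sigma(\hat\beta\mid X),
\]
I would take iterated expectations over $\varepsilon$, $\beta$, and $X$; the order does not matter because both components are nonnegative and $\beta \perp X$ under Assumption~\ref{assump:random-effects2}.

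For the variance part, Assumptions~\ref{assump:DGP} and~\ref{assump:exch} are precisely the hypotheses of Theorem~\ref{thm:main2}, so I can quote that theorem to obtain
\[
\E_X[\Var_\Sigma(\hat\beta\mid X)] = \frac{1}{n}\Tr(\Omega)\,\E_X\!\left[\Tr\!\bigl((X^\top X)^\dagger \Sigma\bigr)\right].
\]
No further work is required here, since the variance term does not depend on $\beta$.

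For the bias part, I would invoke expression~\eqref{eq:bias_pr} and then average over the random coefficient $\beta$. Writing the squared bias as the trace of $S\beta(S\beta)^\top$ against $\lim_{\lambda\searrow 0}\lambda^2(S^{-1}\hat\Sigma S + \lambda I)^{-2}$, and using $\E_\beta[S\beta(S\beta)^\top] = r_\Sigma^2 I/p$ from Assumption~\ref{assump:random-effects2} together with linearity of trace, the $\beta$-expected squared bias reduces to $\frac{r_\Sigma^2}{p}\sum_{i=1}^p \lim_{\lambda\searrow 0}\lambda^2/(\tilde s_i+\lambda)^2$, i.e.\ $\frac{r_\Sigma^2}{p}$ times the number of zero eigenvalues of $S^{-1}\hat\Sigma S$. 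Because $\mathrm{rank}(S^{-1}\hat\Sigma S)=\mathrm{rank}(X)=n$ almost surely under Assumption~\ref{assump:rank} (using that $S$ is invertible, as $\Sigma$ is positive definite), there are exactly $p-n$ vanishing eigenvalues, so the $\beta$-averaged squared bias equals $r_\Sigma^2(p-n)/p$ independently of the realization of $X$.

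Since the bias contribution is already a constant with respect to $X$, the outer $\E_X$ leaves it unchanged, and summing the two pieces yields the claim. The only step that warrants a line of justification is the exchange of the limit in $\lambda$ with $\E_\beta$ and with the trace: a bounded convergence argument works, as $\lambda^2/(\tilde s_i+\lambda)^2\le 1$ uniformly. Beyond that, I do not foresee any real obstacle---the corollary is essentially bookkeeping on top of Theorem~\ref{thm:main2} and the bias derivation in Section~\ref{subsec:bias_pr}.
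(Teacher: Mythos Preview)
Your proposal is correct and follows exactly the paper's approach: the corollary is obtained by summing the variance expression from Theorem~\ref{thm:main2} with the $\beta$-averaged squared bias $r_\Sigma^2(1-n/p)$ derived in Section~\ref{subsec:bias_pr}, and the paper does not supply any additional argument beyond this bookkeeping. If anything, your write-up is more careful than the paper's, which simply states the corollary after the bias computation without further justification.
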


\subsection{The bias component of estimation risk}\label{subsec:bias_er}

For the bias component of estimation risk, we can obtain a similar result with \ref{subsec:bias_pr} as follows:
\begin{align*}
[\mathrm{Bias}(\hat \beta \mid X)]^2
=\beta^\top (I - \hat\Sigma^\dagger \hat\Sigma) \beta
&=\lim_{\lambda\searrow 0}\beta^\top \lambda (\hat\Sigma+\lambda I)^{-1}  \beta.
\end{align*}

 \begin{assumption}\label{assump:random-effects}
 $\E_\beta[\beta\beta^\top]=r^2I/p$,
where $r^2 := \E_\beta[\|\beta\|^2]<\infty$ and $\beta$ is independent of $X$.
 \end{assumption}
Under Assumption~\ref{assump:random-effects}, 
we have the expected bias (conditional on $X$) as follows:
\begin{align}\label{bias:random-efffects}
\E_\beta[\mathrm{Bias}(\hat \beta \mid X)^2 \mid X]
&=\frac{r^2}{p}\lim_{\lambda\searrow 0}\sum_{i=1}^p \frac{\lambda}{s_i+\lambda}
=\frac{r^2}{p}\left|\{i\in[p]: s_i= 0\}\right|
=r^2\frac{p-n}{p},
\end{align}
where $s_i$ are the eigenvalues of $\hat\Sigma\in\R^{p\times p}$ and $\mathrm{rank}(\hat\Sigma)=\mathrm{rank}(X)=n$ under Assumption \ref{assump:rank}.

Thanks to Theorem~\ref{thm:main}
and \eqref{bias:random-efffects},
we obtain the following corollary for estimation risk.

\begin{corollary}\label{cor:constant_omega}
    Let Assumptions
    ~\ref{assump:DGP},
    ~\ref{assump:exch}, 
    ~\ref{assump:rank},
    and ~\ref{assump:random-effects}    
    hold. Then, we have
    \begin{align*}
      R_E (\hat \beta)
       &= r^2 \left( 1 - \frac{n}{p} \right) + \frac{\Tr(\Omega)}{n}\E_X\left[\int \frac{1}{s} dF^{XX^\top/n}(s)\right],
    \end{align*}
    where $F^A(s):=\frac{1}{n}\sum_{i=1}^n 1\{\lambda_i(A)\leq s\}$ is the empirical spectral distribution of a matrix $A$ and $\lambda_1(A),\lambda_2(A),\cdots,\lambda_n(A)$
    are the eigenvalues of $A$. 
\end{corollary}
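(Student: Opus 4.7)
The plan is to assemble the corollary directly from three ingredients already in place in the paper: the bias--variance decomposition of $R_E(\hat\beta)$ from Section~2, the exact expected squared bias computation in \eqref{bias:random-efffects}, and the exact expected variance formula from Theorem~\ref{thm:main}. Once these are combined and rescaled appropriately, the claimed identity falls out without further input.

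First I would write
$$
R_E(\hat\beta)=\E_X\!\left[\E_\beta\!\left[\mathrm{Bias}(\hat\beta\mid X)^2\mid X\right]\right]+\E_X\!\left[\Var(\hat\beta\mid X)\right],
$$
which uses the bias--variance decomposition of $R_E(\hat\beta\mid X)$ together with the independence of $\beta$ and $X$ from Assumption~\ref{assump:random-effects} (so the order of expectations in the bias part is interchangeable, and the noise $\varepsilon$ integrates out inside $\Var(\hat\beta\mid X)$ by Assumption~\ref{assump:DGP}).

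Next, for the bias I would insert \eqref{bias:random-efffects} verbatim. The key observation behind that identity is that $I-\hat\Sigma^\dagger\hat\Sigma$ is the orthogonal projection onto the null space of $X$, whose rank is $p-n$ almost surely under Assumption~\ref{assump:rank}; combined with $\E_\beta[\beta\beta^\top]=r^2 I/p$ from Assumption~\ref{assump:random-effects}, the trace evaluates to $r^2(p-n)/p=r^2(1-n/p)$. Since this quantity is deterministic (independent of $X$), the outer expectation over $X$ leaves it unchanged.

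For the variance, I would invoke Theorem~\ref{thm:main}, which under Assumptions~\ref{assump:DGP} and~\ref{assump:exch} yields $\E_X[\Var(\hat\beta\mid X)]=\frac{1}{np}\Tr(\Omega)\,\E_X[\Tr(\Lambda^\dagger)]$ with $\Lambda$ the diagonal eigenvalue matrix of $XX^\top/p$, and then convert that trace into the integral against $F^{XX^\top/n}$ required by the statement. Writing the nonzero eigenvalues of $XX^\top$ as $\mu_1,\ldots,\mu_n$ (all strictly positive almost surely under Assumption~\ref{assump:rank}, so that $\Lambda^\dagger=\Lambda^{-1}$), one has $\Tr(\Lambda^\dagger)=p\sum_i \mu_i^{-1}$ while $\int s^{-1}\,dF^{XX^\top/n}(s)=\sum_i \mu_i^{-1}$, so the factor of $p$ cancels the $1/p$ in Theorem~\ref{thm:main} and the variance contribution becomes exactly $\frac{\Tr(\Omega)}{n}\E_X\!\left[\int s^{-1}\,dF^{XX^\top/n}(s)\right]$. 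Adding the bias and variance contributions gives the claimed expression.

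There is no substantive obstacle here, as the heavy lifting was already done in Theorem~\ref{thm:main} and in the derivation at \eqref{bias:random-efffects}. The only step requiring care is the bookkeeping across three different normalizations that coexist in the paper ($\hat\Sigma=X^\top X/n$ appearing in the bias, $XX^\top/p$ in Theorem~\ref{thm:main}, and $XX^\top/n$ in the corollary statement), together with a brief invocation of Assumption~\ref{assump:rank} to rule out the measure-zero event on which $X$ is rank-deficient and $\Lambda^\dagger\neq\Lambda^{-1}$.
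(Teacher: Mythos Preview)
Your proposal is correct and follows essentially the same route as the paper: combine the bias computation \eqref{bias:random-efffects} with the variance formula from Theorem~\ref{thm:main}, then rescale the trace $\Tr(\Lambda^\dagger)$ into the spectral integral against $F^{XX^\top/n}$. The paper's proof passes through $F^{XX^\top/p}$ as an intermediate step rather than the raw eigenvalues $\mu_i$ of $XX^\top$, but the bookkeeping is equivalent and your version is, if anything, slightly more transparent about where the factors of $p$ and $n$ go.
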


The proof of Corollary~\ref{cor:constant_omega} is in the  appendix.

\subsubsection{Asymptotic analysis of estimation risk}

To study the asymptotic behavior of estimation risk, we follow the previous approaches \citep{Dobriban:Wager,hastie2022surprises}.
First, we define the Stieltjes transform as follows:

\begin{definition}
The Stieltjes transform $s_F(z)$ of a df $F$ is defined as:
$$s_F(z) := \int \frac{1}{x-z}dF(x),\ \text{for}\ z\in \C\setminus\mathrm{supp}(F).$$
\end{definition}

\begin{figure*}[!htb]
    \centering
    \includegraphics[width=.48\textwidth]{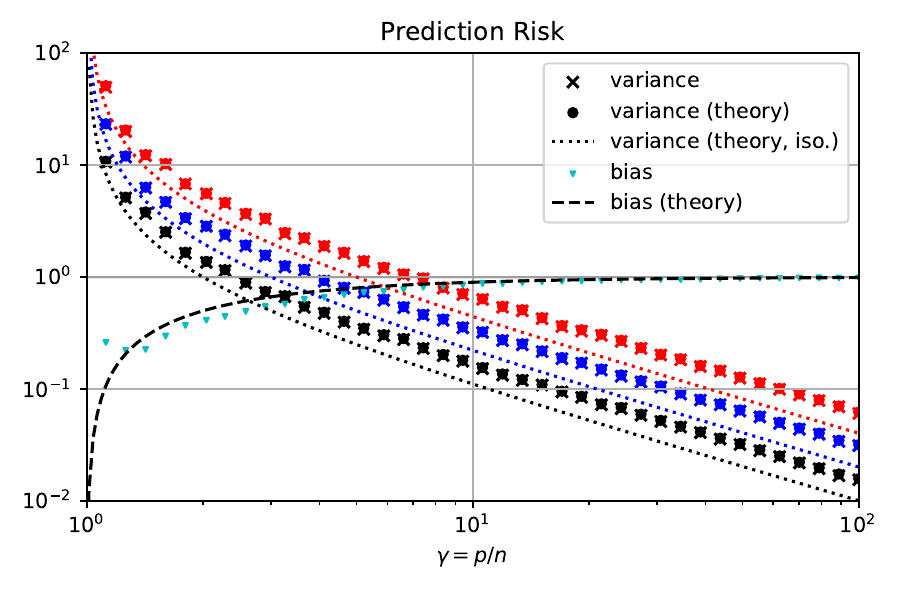}
    \includegraphics[width=.48\textwidth]{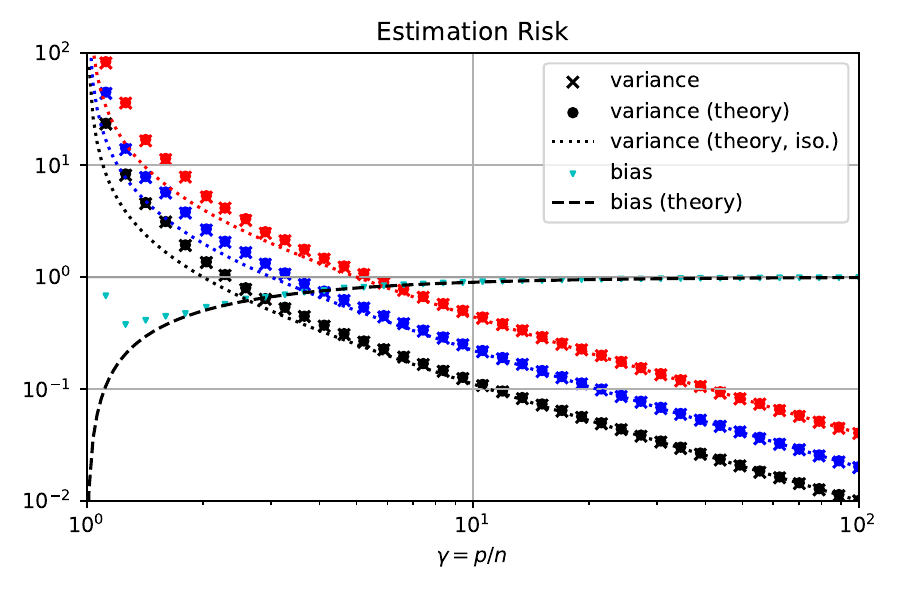}
    \caption{
    The ``descent curve'' in the overparameterization regime
    for prediction risk (left) and estimation risk (right).
    We test 
    $\Omega$'s with $\Tr(\Omega)/n=1,2,4$ in black, blue, red, respectively.
    For the anisotropic feature, the expected variance ($\times$)
    and its theoretical expression ($\medbullet$)
    are {$\Theta\left(\frac{\Tr(\Omega)/n}{\gamma-1}\right)$} and larger than that in the high-dimensional asymptotics for the isotropic $\Sigma=I$.
    For the isotropic $\Sigma=I$, the variance terms (dotted) and the bias terms (dashed) in the high-dimensional asymptotics are $\frac{1}{\gamma-1}\lim_{n\rightarrow\infty}\frac{\Tr(\Omega)}{n}$
    and $r^2\left(1-\frac{1}{\gamma}\right)$, respectively. 
    }
    \label{fig:asymp}
\end{figure*}
We are now ready to investigate the asymptotic behavior of the mean squared estimation error
with the following theorem:
\begin{theorem}[]{
\citep[Theorem 1.1]{silverstein1995empirical}}
\label{thm:s}  
    Suppose that the rows $\{x_i\}_{i=1}^n$ in $X$ are i.i.d. centered random vectors with $\E[x_1x_1^\top]=\Sigma$ and that the empirical spectral distribution
    $F^\Sigma(s) = \frac{1}{p}\sum_{i=1}^{p}1\{\tau_i\leq s\}$ of $\Sigma$
    converges almost surely to a probability distribution function $H$ as $p\rightarrow \infty$.
    When $p/n\rightarrow \gamma>0$ as $n,p\rightarrow \infty$, then a.s., $F^{XX^\top/n}$ converges vaguely to a df $F$ and
    the limit
    $s^\ast:= \lim_{z\searrow 0}s_F(z)$
    of its Stieltjes transform $s_F$
    is the unique solution to the equation:
    \begin{align}\label{eq:stieltjes}
        1-\frac{1}{\gamma} =\int \frac{1}{1+\tau s^\ast}dH(\tau).
    \end{align}     
\end{theorem}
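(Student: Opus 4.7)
The plan is to follow the Stieltjes-transform method, the standard device for translating convergence of empirical spectral distributions (ESDs) into pointwise convergence of resolvent traces on the upper half-plane $\C^+$. My strategy has three stages: derive a self-consistent fixed-point equation for the limiting Stieltjes transform $s_F(z)$ on $\C^+$; establish uniqueness of the solution within the natural class of Stieltjes transforms; and then pass to the boundary $z \searrow 0$.

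For the first stage, set $S_n := XX^\top/n$ and $G_n(z) := (S_n - zI_n)^{-1}$ for $z \in \C^+$, so that $s_{F^{S_n}}(z) = n^{-1}\tr G_n(z)$. I would extract a typical diagonal entry $G_n(z)_{ii}$ by applying the Sherman--Morrison rank-one update identity to isolate the contribution of $x_i^\top/\sqrt{n}$ and re-express $G_n(z)_{ii}$ through a leave-one-out resolvent that is independent of $x_i$. The Bai--Silverstein concentration lemma for quadratic forms of independent random vectors then gives $n^{-1}\, x_i^\top A\, x_i - n^{-1}\tr(\Sigma A) \to 0$ almost surely for $A$ of bounded operator norm independent of $x_i$, allowing each diagonal entry of $G_n(z)$ to be replaced by a deterministic expression involving $H$ and the resolvent trace itself. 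Averaging over $i$ and taking $n,p \to \infty$ with $p/n \to \gamma$ produces a Marchenko--Pastur--Silverstein fixed-point equation of the form
\begin{equation*}
s_F(z) \;=\; \left(-z + \gamma \int \frac{\tau}{1+\tau\, s_F(z)}\, dH(\tau)\right)^{-1}
\end{equation*}
on $\C^+$, after using the companion identity relating the ESDs of $XX^\top/n$ and $X^\top X/n$ via their shared nonzero spectra.

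For the second stage, uniqueness is forced by the requirement that $s_F$ be the Stieltjes transform of a sub-probability measure, that is $s_F(\C^+) \subset \C^+$ together with the normalization $\lim_{y \to \infty} iy\, s_F(iy) = -1$; a contraction-mapping argument on the fixed-point equation, restricted to this class, singles out a unique analytic solution. For the third stage, the limit $s^* := \lim_{z \searrow 0} s_F(z)$ is pushed through the fixed-point equation using monotonicity of $s_F$ along the positive imaginary axis together with dominated convergence for the integral against $dH(\tau)$; algebraic rearrangement of the resulting identity at $z = 0$ then yields the stated equation $1 - \gamma^{-1} = \int (1 + \tau s^*)^{-1}\, dH(\tau)$.

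The main obstacle is the passage to the hard edge $z \searrow 0$, because the Marchenko--Pastur--Silverstein equation is naturally formulated for $z$ bounded away from the real spectrum. One must ensure that $F$ places no mass at $0$ (so that $s^*$ is well-defined as a finite real number), which in the overparameterized regime $\gamma > 1$ requires a condition ruling out vanishingly small eigenvalues of $\Sigma$ encoded in the limiting $H$, and one must establish uniform integrability of $(1 + \tau\, s_F(z))^{-1}$ against $dH$ in order to interchange limit and integration. Silverstein's original argument handles these points by truncation of $H$ together with Burkholder-type martingale inequalities applied to the leave-one-out decomposition, and I would follow essentially the same route.
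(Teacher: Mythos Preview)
Your proposal is a reasonable outline of the standard Stieltjes-transform proof, but it goes far beyond what the paper actually does: the paper provides no proof of this statement at all. It simply records the result and writes ``This theorem is a direct consequence of Theorem 1.1 in \citet{silverstein1995empirical}.'' In other words, the paper's ``proof'' is a bare citation, whereas you have sketched the leave-one-out resolvent argument, the Bai--Silverstein quadratic-form concentration, the Marchenko--Pastur--Silverstein fixed-point equation, uniqueness, and the $z\searrow 0$ passage. Your route is essentially Silverstein's own, so in that sense you are reconstructing the cited reference rather than offering an alternative; the algebraic check that your fixed-point equation at $z=0$ rearranges to $1-\gamma^{-1}=\int(1+\tau s^\ast)^{-1}\,dH(\tau)$ is correct. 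One caveat: the boundary limit $s^\ast=\lim_{z\searrow 0}s_F(z)$ and equation~\eqref{eq:stieltjes} as stated here are not literally in Silverstein's 1995 theorem, which works strictly on $\C^+$; the $z\searrow 0$ extension requires the additional edge analysis you flag as the ``main obstacle,'' and is typically attributed to later work (e.g., Silverstein--Choi or the treatments in Dobriban--Wager and Hastie et al.). So your identification of that step as the delicate one is accurate, and the paper sidesteps it entirely by citation.
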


This theorem is a direct consequence of Theorem 1.1 in \citet{silverstein1995empirical}.
Then, from Corollary \ref{cor:constant_omega}, we can write the limit of estimation risk as follows:

\begin{corollary}\label{cor:limit:mse}
    Let Assumptions~\ref{assump:DGP},
    ~\ref{assump:exch},
    ~\ref{assump:rank},
    and ~\ref{assump:random-effects}    
    hold. Then, under the same assumption as Theorem \ref{thm:s}, as $n,p\rightarrow \infty$ and $p/n\rightarrow \gamma$, where $1 < \gamma < \infty$ is a constant,
    we have 
    \begin{align*}
      R_E (\hat \beta)
       &= \mathbb{E} \big[ \| \hat\beta-\beta \|^2 \big] 
       \rightarrow 
       r^2 \left( 1 - \frac{1}{\gamma} \right) + s^\ast \lim_{n\rightarrow \infty}\frac{\Tr(\Omega)}{n}.
    \end{align*}
\end{corollary}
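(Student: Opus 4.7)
My plan is to deduce Corollary~\ref{cor:limit:mse} by passing to the limit in the finite-sample formula already established in Corollary~\ref{cor:constant_omega}:
$$R_E(\hat\beta) = r^2\left(1 - \frac{n}{p}\right) + \frac{\Tr(\Omega)}{n}\,\mathbb{E}_X\!\left[\int \frac{1}{s}\,dF^{XX^\top/n}(s)\right].$$
The first (bias) term is immediate: since $p/n \to \gamma$, we have $n/p \to 1/\gamma$, so $r^2(1-n/p) \to r^2(1-1/\gamma)$. The existence of $\lim_{n\to\infty} \Tr(\Omega)/n$ is assumed in the statement, so all the real work is in showing that
$$\mathbb{E}_X\!\left[\int \frac{1}{s}\,dF^{XX^\top/n}(s)\right] \longrightarrow s^\ast.$$

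The first key observation is that, pathwise in $X$, the integral $\int s^{-1}\,dF^{XX^\top/n}(s)$ can be recovered as the boundary evaluation of the Stieltjes transform: for any real $z$ with $0 < z < \lambda_{\min}(XX^\top/n)$,
$$s_{F^{XX^\top/n}}(z) = \int \frac{1}{s-z}\,dF^{XX^\top/n}(s),$$
and by monotone convergence this tends to $\int s^{-1}\,dF^{XX^\top/n}(s)$ as $z \searrow 0$. On the limit side, Theorem~\ref{thm:s} gives both the vague convergence $F^{XX^\top/n}\Rightarrow F$ a.s.\ and the identification $\lim_{z\searrow 0} s_F(z) = s^\ast$ via equation~\eqref{eq:stieltjes}. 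In the overparameterized regime $\gamma > 1$, the fixed-point equation~\eqref{eq:stieltjes} forces the limiting spectrum $F$ to be supported on an interval $[a,b]$ with $a>0$, so $s^\ast = \int s^{-1}\,dF(s)$ is finite. Combining these with a ``hard-edge'' lower bound on the smallest sample eigenvalue (a Bai--Yin-type statement for $\lambda_{\min}(XX^\top/n)$ in the $\gamma>1$ regime) lets us restrict the integration to a compact interval bounded away from $0$ eventually a.s., and on such an interval $s\mapsto 1/s$ is bounded and continuous; hence vague convergence of measures upgrades to convergence of integrals, giving $\int s^{-1}\,dF^{XX^\top/n}(s)\to s^\ast$ almost surely.

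To pass the limit through $\mathbb{E}_X$, I would invoke dominated convergence, using the pathwise bound $\int s^{-1}\,dF^{XX^\top/n}(s) \le \lambda_{\min}(XX^\top/n)^{-1}$ together with a uniform integrability estimate for $\lambda_{\min}(XX^\top/n)^{-1}$ (for example a uniform $L^q$ bound for some $q>1$ obtained from non-asymptotic concentration for the smallest singular value of tall random matrices). The main obstacle is precisely the unboundedness of the test function $s\mapsto 1/s$ near zero: vague convergence of the spectral distributions is not by itself enough to conclude convergence of their integrals against $1/s$, so one must quantitatively control $\lambda_{\min}(XX^\top/n)$. The regime $\gamma>1$ is exactly what makes this step work, both by pushing the support of $F$ off zero and by ensuring that the sample smallest eigenvalue is bounded away from zero uniformly enough to make the dominated-convergence argument go through.
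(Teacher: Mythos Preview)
The paper does not actually supply a proof of this corollary: it simply states the result as following ``from Corollary~\ref{cor:constant_omega}'' together with Theorem~\ref{thm:s}, implicitly treating the passage to the limit in the variance term as immediate once the almost-sure convergence of the empirical spectral distribution and the identification of $s^\ast$ via~\eqref{eq:stieltjes} are in hand. Your proposal therefore follows the same route as the paper in spirit, but is substantially more careful about the analytic steps the paper leaves to the reader.

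In particular, you correctly isolate the only nontrivial point: vague convergence $F^{XX^\top/n}\Rightarrow F$ does not by itself imply $\int s^{-1}\,dF^{XX^\top/n}(s)\to\int s^{-1}\,dF(s)$ because $s\mapsto 1/s$ is unbounded near zero, and you propose the standard remedy (a Bai--Yin hard-edge bound on $\lambda_{\min}(XX^\top/n)$ in the $\gamma>1$ regime to localize the integral away from zero, followed by dominated convergence with a uniform-integrability estimate for $\lambda_{\min}^{-1}$). This is exactly the argument a rigorous proof would require; the paper tacitly relies on these facts from the random-matrix literature without spelling them out. One small caveat: the Bai--Yin edge result and the $L^q$ control on $\lambda_{\min}^{-1}$ you invoke typically need a bit more structure than the bare hypotheses of Theorem~\ref{thm:s} (e.g., finite fourth moments and an i.i.d.-entry representation $X=Z\Sigma^{1/2}$), so strictly speaking you are assuming slightly more than the paper states---but the paper itself is silent on this, so the gap is the paper's rather than yours.
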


Here, the limit $s^\ast$
of the Stieltjes transform $s_F$
is highly connected with the shape of the spectral distribution of $\Sigma$.
For example, in the case of isotropic features ($\Sigma=I$), i.e., $dH(\tau)=\delta(\tau-1) d\tau$, we have $s^\ast_\text{iso} =(\gamma-1)^{-1}$ from $1-\frac{1}{\gamma}=\frac{1}{1+s^\ast_\text{iso}}$.
In addition, if $\Omega = \sigma^2 I$, then the limit of the mean squared error is exactly the same as the expression for $\gamma > 1$ in equation (10) of \citet[Theorem 1]{hastie2022surprises}. This is because prediction risk is the same as estimation risk when $\Sigma = I$.

\begin{remark}
Generally, if the support of $H$ is bounded within $[c_H,C_H]\subset \R$ for some positive constants $0<c_H\leq C_H<\infty$, then we can observe the double descent phenomenon in the overparameterization regime with
$\lim_{\gamma\searrow 1}s^\ast =\infty $ and $\lim_{\gamma\rightarrow \infty }s^\ast = 0$ with $s^\ast =\Theta\left(\frac{1}{\gamma-1}\right)$ from the following inequalities:
\begin{align}\label{eq:ineq}
    C_H^{-1}\frac{1}{\gamma-1}\leq s^\ast \leq c_H^{-1}\frac{1}{\gamma-1}.    
\end{align}
In fact, a tighter lower bound is available:
\begin{align}\label{eq:tighter:lb}
    s^\ast \geq \mu_H^{-1}(\gamma-1)^{-1},
\end{align}
where $\mu_H:=\E_{\tau\sim H}[\tau]$, i.e., the mean of distribution $H$.
The proofs of \eqref{eq:ineq} and \eqref{eq:tighter:lb} are given in the supplementary appendix.   
\end{remark}

We conclude this paper by plotting the ``descent curve'' in the overparameterization regime in Figure \ref{fig:asymp}. 
On one hand, the expected variance ($\times$)
perfectly matches its theoretical counterpart  ($\medbullet$)
and goes to zero as $\gamma$ gets large. On the other hand, the bias term is bounded even if $\gamma \rightarrow \infty$.  
The appendix contains the experimental details for all the figures.







\appendix

\section{Details for drawing Figure~\ref{fig:ACS}}\label{app:ACS}

To draw Figure~\ref{fig:ACS}, we use a sample extract from American Community Survey (ACS) 2018. To have a relatively homogeneous population, the sample extract is restricted to white males residing in California with at least a bachelor’s degree. We consider a demographic group defined by their age in years (between 25 and 70), the type of degree (bachelor's, master's, professional, and doctoral), and the field of degree (172 unique values). 
Then, we compute the average of log hourly wages for each age-degree-field group (all together 7,073 unique groups in the sample). We treat each group average as the outcome variable (say, $y_{a,d,f}$) and predict group wages by various group-level regression models where the regressors are constructed using the indicator variables of age, degree, and field as well as their interactions: that is, 
\begin{align*}
    y_{a,d,f} = x_{a,d,f}^\top \, \beta + \varepsilon_{a,d,f}.
\end{align*}
For the regressors $x_{a,d,f}$, we consider 7 specifications ranging from 209 to 2,183 regressors:
\begin{itemize}
\item Spec. 1 $(p = 209)$: dummy variables for age (say, $x_a$)  $+$ dummy variables for the type of degree (say, $x_d$) $+$ dummy variables for the field of degree (say, $x_f$),
\item Spec. 2 $(p = 391)$: Spec. 1 $+$ all interactions between $x_d$ and $x_a$,
\item Spec. 3 $(p = 598)$: Spec. 1 $+$ all interactions between $x_d$ and $x_f$, 
\item Spec. 4 $(p = 778)$: Spec. 1 $+$ all interactions between $x_d$ and $x_a$ $+$ all interactions between $x_d$ and $x_f$, 
\item Spec. 5 $(p = 1640)$: Spec. 1 $+$ all interactions between $x_d$ and $x_a$ $+$ all interactions between $x_a$ and $x_f$,
\item Spec. 6 $(p = 1754)$: Spec. 1 $+$ all interactions between $x_d$ and $x_f$ $+$ all interactions between $x_a$ and $x_f$,
\item Spec. 7 $(p = 2182)$: Spec. 1 $+$ all three-way interactions among $x_a$, $x_d$ and $x_f$. 
\end{itemize}
Here, the dummy variable are constructed using one-hot encoding.
We randomly split the sample into the train and test samples with a ratio of $1:4$. The resulting sample sizes are 1,415 and 5,658, respectively. To understand the role of non-i.i.d. regressor errors, we add the artificial noise to the training sample: that is, we compute the ridgeless least squares estimator using the training sample of $(\tilde{y}_{a,d,f}, x_{a,d,f}^\top)^\top$, where $\tilde{y}_{a,d,f} = y_{a,d,f} + u_{a,d,f}$. Here, 
the artificial noise $u_{a,d,f}$ has the form  
\begin{align*}
    u_{a,d,f} \equiv \frac{(1-c)e_{a,d,f} + c \cdot e_{f}}{\sqrt{(1-c)^2+c^2}},
\end{align*}
where $e_{a,d,f} \sim N(0, \sigma^2)$, independently across age ($a$), degree ($d$) and field ($f$); $e_{f}$ is the average of another independent $N(0, \sigma^2)$ variable within $f$ (hence, $e_f$ is identical for each value of $f$) and thus the source of clustered errors; and $c \in \{0,0.25,0.5,0.75\}$ is a constant that will be varied across the experiment. As $c$ gets larger, the noise has a larger share of clustered errors but the variance of the overall regression errors ($u_{a,d,f}$) remains the same: in other words, $\text{var}(u_{a,d,f}) = \sigma^2$ for each value of $c$.
Figure~\ref{fig:ACS} was generated with $\sigma = 0.5$ by generating the artificial noise only once.

\section{Details for drawing Figures \ref{fig:AR}, \ref{fig:Group}, and \ref{fig:asymp}}



To draw Figure \ref{fig:AR}, \ref{fig:Group}, and \ref{fig:asymp}, we sample $\{x_i\}_{i=1}^n$ from $\normal(0,\Sigma)$ with $\Sigma = U_\Sigma D_\Sigma U_\Sigma^\top$
where $U_\Sigma$ is an orthogonal matrix random variable, drawn from the uniform (Haar) distribution on $O(p)$, and $D_\Sigma$ is a diagonal matrix with its elements $d_i=|z_i|/\sum_{i=1}^p|z_i|$ being sampled with $z_i\sim\normal(0,1)$ for each $i=1,2,\cdots,p$.
With this general anisotropic $\Sigma$,
the term $\E_X[\Tr(\Lambda^{-1})]/p$ 
is somewhat larger than $\mu_H^{-1}s^\ast_\text{iso}=(\gamma-1)^{-1}$ which is $1$ in Figure \ref{fig:AR} and \ref{fig:Group} since $\mu_H=1$ and $\gamma=2$.
For example, in Figure \ref{fig:AR}, when $\sigma^2=1,\rho^2=0$, we have $\Tr(\Omega)/n=1$ but $\Tr(\Omega)\E_X[\Tr(\Lambda^{-1})]/(np)>1$.

In Figure \ref{fig:asymp}, 
we fix $n=50$ and use $p=n\gamma$ for $\gamma\in [1,100]$.

To compute the expectations of $\E_X[\Var(\hat\beta|X)]$ and $\E_X[\Tr(\Lambda^{-1})]$ over $X$, we sample $N_X$ samples of $X$'s, $X_1, X_2, \cdots, X_{N_X}$.
Moreover, to compute the expectation over $\varepsilon$ in 
$$\Var(\hat\beta|X_i)\equiv \Tr\left(\E_\varepsilon[\hat\beta \hat\beta^\top]-\E_\varepsilon[\hat\beta]\E_\varepsilon[\hat\beta]^\top\right),$$ 
we sample $N_\varepsilon$ samples of $\varepsilon$'s, $\varepsilon_1,\varepsilon_2,\cdots,\varepsilon_{N_\varepsilon}$ for each realization $X_i$.
To be specific, 
\begin{align*}
    \E_X[\Var(\hat\beta|X)] &\approx \frac{1}{N_X}\sum_{i=1}^{N_X} \Var(\hat\beta|X_i) \approx \frac{1}{N_X}\sum_{i=1}^{N_X} \Tr\left(\frac{1}{N_\varepsilon}\sum_{j=1}^{N_\varepsilon}\hat\beta_{i,j} \hat\beta_{i,j}^\top -\frac{1}{N_\varepsilon}\sum_{j=1}^{N_\varepsilon}\hat\beta_{i,j}\frac{1}{N_\varepsilon}\sum_{j=1}^{N_\varepsilon}\hat\beta_{i,j}^\top\right)\\
    \frac{1}{p}\E_X[\Tr(\Lambda^{-1})]&\approx \frac{1}{N_X}\sum_{i=1}^{N_X}\Tr((X_iX_i^\top)^{-1})
    = \frac{1}{N_X}\sum_{i=1}^{N_X}\sum_{k=1}^n \frac{1}{\lambda_k(X_iX_i^\top)},
\end{align*}
where $\hat\beta_{i,j} =
\argmin_\beta \{\|b\|: X_ib-y_{i,j}=0\}$, $y_{i,j}=X_i\beta+\varepsilon_j$,
and $\lambda_k(X_iX_i^\top)$ is the $k$-th eigenvalue of $X_iX_i^\top$.
We can do similarly for the variance part of the prediction risk.


Figure~\ref{fig:Group2} shows an additional experimental result. 

\begin{figure}[!ht]
    \centering
    \includegraphics[width=.48\textwidth]{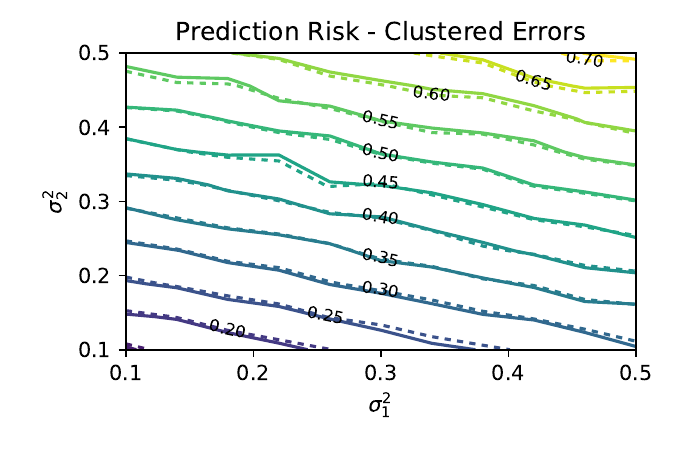}
    \includegraphics[width=.48\textwidth]{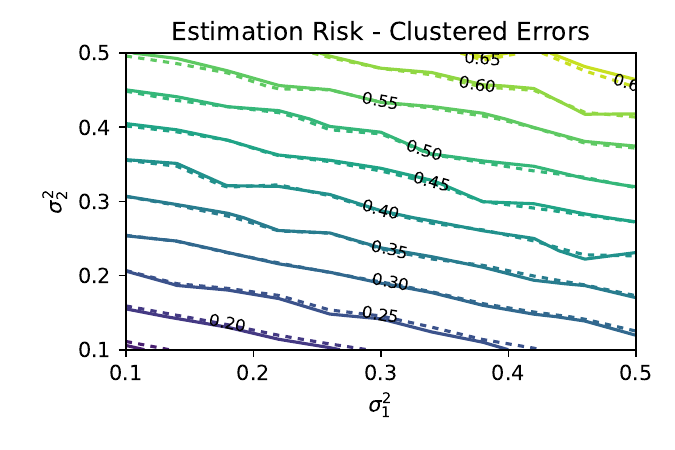}
    \caption{
    We use the same setting as Figure \ref{fig:Group}, except uniformly sample each $\rho_i$ from $[0 ,0.05]$ for each experiment with the pairs $(\sigma_1^2,\sigma_1^2)$.
    As expected, the off-diagonal elements $\rho_i$ of $\Omega$ do not affect the expected variances.
    }
    \label{fig:Group2}
\end{figure}

\section{Proofs omitted in the main text}


\begin{proof}[Proof of Lemma~\ref{lem:OZ}]
    For a given $A\in \gS$, since $A^{-1}\in \gS$, we have $Z\deq A^{-1} Z:= \tilde Z$ and
    \begin{align*}
        \E_Z[f(Z)]=\E_{A^{-1} Z}[f(Z)]=\E_{\tilde Z}[f(A\tilde Z)]=\E_{Z}[f(AZ)].
    \end{align*}
    This naturally leads to $$\E_{Z}[\E_{A'\sim \nu}[f(A'Z)]]=\E_{A'\sim \nu}[\E_{Z}[f(A'Z)]]=\E_{A'\sim \nu}[\E_{Z}[f(Z)]]=\E_{Z}[f(Z)]$$ where the first equality comes from Fubini's theorem and the integrability of $f$.
\end{proof}

\begin{proof}[Proof of Theorem~\ref{thm:main2}]
Since $\hat\beta=X^\dagger y$, we have
$\Cov(\hat\beta\mid X)
=X^\dagger \Cov(y\mid X)X^{\dagger \top}=X^\dagger \Omega X^{\dagger \top}$, which leads to the following expression for the variance component of prediction risk:
\begin{align*}
    \Var_\Sigma(\hat\beta\mid X) 
    &= \Tr(\Cov(\hat\beta\mid X)\Sigma)
    = \Tr( X^\dagger\Omega X^{\dagger \top}\Sigma)
    = \|SX^\dagger T\|_F^2
    = \|B T\|_F^2,
\end{align*}
where $S=\Sigma^{1/2},T=\Omega^{1/2}$, and $B=SX^\dagger$.
Using the singular value decomposition (SVD) of $B$ and $T$, respectively, we can rewrite this as follows:
\begin{align*}
    \|B T\|_F^2 = \|U D V^\top U_TD_TV_T^\top \|_F^2{=\|DV^\top U_TD_T \|_F^2},
\end{align*}
where $B=UDV^\top$ and $T=U_TD_T V_T^\top$ with orthogonal matrices $U,V,U_T,V_T$, and diagonal matrices $D,D_T$.
Now we need to compute the alignment $V^\top U_T$ of the right-singular vectors of $B$ with the left-eigenvectors of $T$.
\begin{align*}
    {\|DV^\top U_TD_T \|_F^2}
    & = \sum_{i,j=1}^n \left(D_{ii}\sum\nolimits_{k=1}^nV^\top_{ik} (U_T)_{kj}(D_T)_{jj} \right)^2 
    \\
    &
    {= \sum\nolimits_{i,j=1}^n \lambda_i(B)^2\lambda_j(T)^2\gamma_{ij}} \\
    &{= \sum\nolimits_{i,j=1}^n \lambda_i\left((X^\top X)^\dagger\Sigma\right)\lambda_j(\Omega) \gamma_{ij}} \\
    &= \underbracket{\lambda\left((X^\top X)^\dagger\Sigma\right)^\top}_{1\times n}\underbracket{\Gamma(X)}_{n\times n} \underbracket{\lambda(\Omega)}_{n\times 1},  
\end{align*}
where $\gamma_{ij}:=\langle V_{:i},(U_T)_{:j}\rangle^2\geq 0$, $\Gamma(X):=(\gamma_{ij})_{i,j}\in\R^{n\times n}$
and $\lambda(A)\in\R^n$ is a vector with its element $\lambda_i(A)$ as the $i$-th largest eigenvalue of $A$.

Therefore, we can rewrite the variance as $\Var_\Sigma(\hat\beta\mid X) = a(X)^\top\Gamma(X) b$ with
\begin{align*}
    a(X)&:=\lambda\left((X^\top X)^\dagger \Sigma\right)\in\R^n,
    \\ 
    b&:=\lambda(\Omega)\in\R^n,\\
    \Gamma(X)_{ij}&=\gamma_{ij}
    = \langle v^{(i)},u^{(j)}\rangle^2,
\end{align*}
where $v^{(i)}:=V_{:i}$ and $u^{(j)}:=(U_T)_{:j}$.
Note that the alignment matrix $\Gamma(X)$ is a doubly stochastic matrix since $\sum_j\gamma_{ij}=\sum_i\gamma_{ij}=1$ and $0\leq\gamma_{ij}\leq 1$.

Now, we want to compute the expected variance. To do so, from Lemma \ref{lem:OZ} with $\gS=O(n)$, we can obtain
\begin{align*}
    \E_{X}[a(X)^\top\Gamma(X) b]
    &=\E_{X}\left[\E_{O\sim \nu}[a(OX)^\top\Gamma(OX) b]\right]
    =\E_{X}\left[a(X)^\top\E_{O\sim\nu}[\Gamma(OX)] b\right],
\end{align*}
where $\nu$ is the unique uniform distribution  (the Haar measure) over the orthogonal matrices $O(n)$. 
For an orthogonal matrix $O\in O(n)$,
we have
\begin{align*}
    \Gamma(OX)_{ij}
    &=\langle Ov^{(i)},u^{(j)}\rangle ^2
    =(v^{(i)\top} O^\top u^{(j)})^2,
\end{align*}
since $S(OX)^\dagger = SX^\dagger O^{\top}=BO^\top=UD(OV)^\top$.
Here, $(OX)^\dagger = X^\dagger O^\top$ follows from the orthogonality of $O\in O(n)$.
Since the Haar measure is invariant under the matrix multiplication in $O(n)$, if we take the expectation over
the Haar measure,
then we have
\begin{align*} 
    \bar\Gamma(X)_{ij}&:=\E_{O\sim\nu}[\Gamma(OX)_{ij}]
    =\E_{O\sim\nu} [(v^{(i)\top} O^\top u^{(j)})^2]
    =\E_{O\sim\nu} [(v^{(i)\top} O^\top O^{(j)\top}u^{(j)})^2].
\end{align*}
Here, for a given $j$, we can choose a matrix $O^{(j)}\in O(n)$ such that its first column is $u^{(j)}$ and $O^{(j)\top}u^{(j)}=e_1$, then $\bar\Gamma(X)_{ij}$ is independent of $j$ (say $\bar\Gamma(X)_{ij}=\alpha_i$). Since $\Gamma(X)$ is doubly stochastic, so is $\bar\Gamma(X)$ and we have
$\sum\nolimits_{j=1}^n \bar\Gamma(X)_{ij}=n\alpha_i=1$ which yields $\bar\Gamma(X)_{ij}=\alpha_i=1/n$, regardless of the distribution of $V$;
thus, $\bar\Gamma(X)=\frac1n J$, where $J_{ij}=1 (i,j=1,2,\cdots, n)$.

Therefore, we have the expected variance as follows:
\begin{align*}
    \E_X[\Var_\Sigma(\hat\beta\mid X)] 
    &= \E_X[a(X)^\top\frac1n J b] 
    = \frac{1}{n}\sum_{i,j=1}^n\E_X[a_i(X)]b_j
    =\frac{1}{n}\E_X[\Tr((X^\top X)^\dagger\Sigma)]\Tr(\Omega).
\end{align*}    
\end{proof}

\begin{proof}[Proof of Corollary~\ref{cor:constant_omega}]
Note that
    \begin{align*}
    \E_X[\Var(\hat\beta|X)]
    &=\frac{\Tr(\Omega)}{p} \E_X\left[\frac{1}{n} \sum_i\frac{1}{\lambda_i}\right] \\
    &=\frac{\Tr(\Omega)}{p}\E_X\left[\int \frac{1}{s} dF^{XX^\top/p}(s)\right]\\
    &=\frac{\Tr(\Omega)}{n}\E_X\left[\int \frac{1}{s} dF^{XX^\top/n}(s)\right].
\end{align*}
Then, the desired result follows directly from \eqref{bias:random-efffects}.
\end{proof}

\begin{proof}[Proof of \eqref{eq:bias_pr}]
The bias term of the prediction risk can be expressed as follows:
\begin{align*}
    [\mathrm{Bias}_\Sigma(\hat\beta\mid X)]^2
    &=\|\E[\hat\beta\mid X]-\beta\|_\Sigma^2\\
    &=\|(\hat\Sigma^\dagger\hat\Sigma-I)\beta\|_\Sigma^2\\
    &=\beta^\top(I-\hat\Sigma^\dagger \hat\Sigma)\Sigma(I-\hat\Sigma^\dagger \hat\Sigma)\beta\\
    &=\beta^\top \lim_{\lambda\searrow 0} \lambda(\hat\Sigma+\lambda I)^{-1} \Sigma\lim_{\lambda\searrow 0}\lambda(\hat\Sigma+\lambda I)^{-1}\beta\\
    &=(S\beta)^\top \lim_{\lambda\searrow 0} \lambda^2 (S^{-1}\hat\Sigma S+\lambda I)^{-2}
    S\beta,
\end{align*}
where $\hat\Sigma = X^\top X/n$.
Here, the fourth equality comes from the equation \begin{align*}
    I-\hat\Sigma^\dagger\hat\Sigma &=\lim_{\lambda\searrow 0} I - (\hat\Sigma+\lambda I)^{-1} \hat\Sigma\\
    &=\lim_{\lambda\searrow 0} I - (\hat\Sigma+\lambda I)^{-1} (\hat\Sigma+\lambda I -\lambda I)\\
    &=\lim_{\lambda\searrow 0} \lambda(\hat\Sigma+\lambda I)^{-1}.
\end{align*}
\end{proof}

\begin{proof}[Proof of \eqref{eq:ineq}]
    The RHS of \eqref{eq:stieltjes} is bounded above by $\int \frac{1}{1+c_H s^\ast}dH(\tau)=\frac{1}{1+c_H s^\ast}$, and thus $1-\frac{1}{\gamma}\leq \frac{1}{1+c_H s^\ast}$, which yields $s^\ast \leq c_H^{-1}\frac{1}{\gamma-1}$.
    We can similarly prove the other inequality in \eqref{eq:ineq} with a lower bound $\frac{1}{1+C_H s^\ast}$ on the RHS of \eqref{eq:stieltjes}.
\end{proof}


\begin{proof}[Proof of \eqref{eq:tighter:lb}]
To further explore the inequalities \eqref{eq:ineq},
we rewrite \eqref{eq:stieltjes} from Theorem~\ref{thm:s} as follows:
\begin{align*}
   1-\frac{1}{\gamma}=\E_{\tau\sim H}\left[g(\tau;s^\ast)\right],\quad\text{where}\ g(t;s) := \frac{1}{1+ts}\ \text{for}\ t,s>0.
\end{align*}
Here, since $g(t;s)$ is convex with respect to $t>0$ for a given $s>0$, by Jensen's inequality, we then have 
$$\E_{\tau\sim H}[g(\tau;\mu_H^{-1}s^\ast_\text{iso})] \geq g\left(\mu_H;\mu_H^{-1}s^\ast_\text{iso}\right)=g(1;s^\ast_\text{iso})=1-\gamma^{-1}$$
where $\mu_H=\E_{\tau\sim H}[\tau]$.
Therefore, the limit Stieltjes transform $s^\ast$ in the anisotropic case should be larger than $\mu_H^{-1}s^\ast_\text{iso}$ of the isotropic case to satisfy $\E_{\tau\sim H}[g(\tau;s^\ast)] = 1-{\gamma}^{-1}$ since $g(t;s)$ is a decreasing function with respect to $s\geq 0$ when $t>0$.
This leads to a tighter lower bound $s^\ast \geq \mu_H^{-1} s^\ast_\text{iso}=\mu_H^{-1}(\gamma-1)^{-1}$ than \eqref{eq:ineq} because $\mu_H\leq C_H$.
\end{proof}

\bibliographystyle{chicago}
\bibliography{example_paper}


\appendix


\end{document}